\documentclass{article}
\usepackage[utf8]{inputenc}
\usepackage{fullpage}
\usepackage{amssymb}
\usepackage{amsmath}
\usepackage{amsthm}
\usepackage{enumerate}
\usepackage{CJK}
\usepackage{color}
\usepackage{mathrsfs}
\usepackage{array}
\textwidth=30cc\baselineskip=10pt

\newtheorem{theorem}{Theorem}[section]
\newtheorem{proposition}[theorem]{Proposition}
\newtheorem{definition}[theorem]{Definition}
\newtheorem{corollary}[theorem]{Corollary}
\newtheorem{lemma}[theorem]{Lemma}
\newtheorem{remark}[theorem]{Remark}
\numberwithin{equation}{section} \numberwithin{theorem}{section}
\numberwithin{equation}{section}

\newtheorem{example}[theorem]{Example}
\numberwithin{equation}{section}

\usepackage{cancel} 
\usepackage{ulem}
\usepackage{soul}

\newcommand{\R}{{\mathbb{R}}}
\newcommand{\B}{{\mathbb{B}}}
\newcommand{\Sph}{{\mathbb{S}}}

\DeclareMathOperator*\argmin{arg\,min}

\DeclareMathOperator*\cl{cl}
\DeclareMathOperator*\gph{gph}
\DeclareMathOperator*\epi{epi}

\DeclareMathOperator*\inte{int}
\DeclareMathOperator*\ri{ri}

\DeclareMathOperator*\dom{dom}

\DeclareMathOperator*\sign{sign}

\DeclareMathOperator*\supp{supp}

\usepackage{amssymb}
\usepackage{mathrsfs}

\title{Variational Analysis of Kurdyka-{\L}ojasiewicz Property by Way of Outer Limiting Subgradients}

\author{Minghua Li\thanks{School of Mathematics and Big Data, Chongqing University of Arts and Sciences, Yongchuan, Chongqing, 402160, China (minghuali20021848@163.com).} 
\and Kaiwen Meng\thanks{School of Mathematics, Southwestern University of Finance and Economics, Chengdu 611130, China (mengkw@swufe.edu.cn)}
\and Xiaoqi Yang\thanks{Department of Applied Mathematics, The Hong Kong Polytechnic University, Hong Kong  (mayangxq@polyu.edu.hk).}}

\begin{document}
\maketitle

\begin{abstract}
In this paper, for a function $f$ locally lower semicontinuous at a stationary point $\bar{x}$, we obtain
complete characterizations of  the  Kurdyka-{\L}ojasiewicz (for short, K{\L}) property  and the exact estimate of the K{\L} modulus via the outer limiting subdifferential of an auxilliary function, and obtain a sufficient condition for verifying  sharpness of the K{\L} exponent.
By introducing a $\frac{1}{1-\theta}$-th subderivative $h$ for $f$ at $\bar{x}$, we show that the K{\L} property of $f$ at $\bar{x}$ with exponent $\theta\in [0, 1)$ can be inherited by   $h$ at $0$ with the same exponent $\theta$, and  that the K{\L} modulus of $f$ at $\bar{x}$ is bounded above by that of $(1-\theta)h$ at $0$. When $\theta=\frac12$, we obtain the reverse results under the strong  metrically subregularity of the subgradient mapping for the class of prox-regular, twice epi-differentiable and subdifferentially continuous functions by virtue of Moreau envelopes. We apply the obtained results to establish the K{\L} property with exponent $\frac12$ and to provide calculations of the K{\L} modulus for smooth functions, the pointwise max   of finitely many smooth functions and the $\ell_p$ ($0<p\leq 1$) regularized functions respectively. It is worth noting that these functions often appear in structured optimization problems.

\end{abstract}

{\bf Keywords}: Kurdyka-{\L}ojasiewicz property, outer limiting subdifferential, first (second) subderivative, graphical derivative, Moreau envelope, prox-regular and twice epi-differentiable function

{\bf AMS: } Primary, 65K10, 65K15; Secondary, 90C26, 49M37

\section{Introduction}
In connection with many important notions in variational analysis and optimization such as stability, error bounds and  the strong metric subregularity of subgradient mappings,  the property of quadratic growth has been studied intensively, see \cite{ag08,ag14,bllzd22,bs00, di15,dmn14,kpyz19,roc} and references therein. Recently this property  has been used in \cite{cln21,cln22,dl18,hlmy2021} for establishing the linear convergence of  optimization algorithms.
For an extended-real-valued function $f$ and a point $\bar{x}$ where 0 is a subgradient, it is well-known \cite[Theorem 13.24]{roc} that the quadratic growth condition holds at $\bar{x}$ if and only if $d^2f(\bar{x}\mid 0)(w)>0$ for all $w\not=0$, where $d^2f(\bar{x}\mid 0)$ is the second subderivative of $f$ at $\bar{x}$ for 0  and can be regarded as a  construction on primal space.

 For an extended-real-valued function $f$ which is prox-regular, twice epi-differenti- able and subdifferentially continuous at $\bar{x}$ for 0,  it has recently been shown in \cite[Theorem 3.7]{chnt21}, among other things,  that the quadratic growth condition holds at $\bar{x}$ if and only if $D(\partial f)(\bar{x}\mid 0)$ is positive definite in the sense that $\langle v, w\rangle>0$ for all $v\in D(\partial f)(\bar{x}\mid 0)(w)$ with $w\not=0$. Here  $D(\partial f)(\bar{x}\mid 0)$ is the subgradient graphical derivative  of $f$ at $\bar{x}$ for 0, which is defined as the graphical derivative acting on the subgradient mapping $\partial f$ and can be regarded as a  construction on dual  space. This result is a generalization of the corresponding one in \cite[Corollay 3.7]{ag14} presented for a convex function.

 The class of prox-regular functions covers all lower semicontinuous, proper, convex functions, lower-$\mathcal{C}^2$ functions, strongly  amenable functions, and fully amenable functions, hence a large core of functions of interest in variational analysis and optimization, see  \cite{pr96} and  \cite[Chapter 13]{roc}.  The  twice epi-differentiability    of a function was first introduced in \cite{roc85}, and then justified in \cite{roc89} for  fully
amenable functions  and in \cite{comi91, io91} for some composite
functions. In a recent paper \cite{ms20},  the twice epi-differentiablity of composite functions was justified under parabolic regularity \cite{mms22},   and some precise formulas for the second subderivatives under the metric subregularity constraint qualification were provided.
The  subdifferential continuity \cite[Definition 13.28]{roc}  of a function is often assumed for technical reasons, which guarantees that nearness of $(x, v)$ to $(\bar{x}, 0)$ in the graph of $\partial f$ automatically entails nearness of $f(x)$ to $f(\bar{x})$.

Besides the important quadratic growth property,  the   Kurdyka-{\L}ojasiewicz (for short, K{\L}) property is another  very important one studied  extensively in variational analysis and optimization.
For an extended-real-valued function $f$ and a point $\bar{x}$ where $f$ is finite and locally lower semicontinuous,  the   K{\L} property of $f$ at $\bar{x}$ amounts to the existence of some $\epsilon>0$ and  $\nu\in (0,+\infty]$ such that the K{\L}  inequality
\[
\psi'(f(x)-f(\bar{x}))\,d(0, \partial f(x))\geq 1
\]
holds for all $x$ with $\|x-\bar{x}\|\leq \epsilon$ and $f(\bar{x})<f(x)<f(\bar{x})+\nu$, where $d(0, \partial f(x))$ stands for the distance of 0 from the limiting subdifferential $\partial f(x)$ of $f$ at $x$,  and $\psi:[0, \nu)\to \R_+$, served as a desingularizing function, is assumed to be continuous, concave and of class $\mathcal{C}^1$ on $(0, \nu)$  with  $\psi(0)=0$ and  $\psi'>0$ over $(0, \nu)$. The term ``K{\L} property'' was first coined in Bolte et al. \cite{bdlm}.   The original and pioneering work of {\L}ojasiewicz \cite{lo63} and Kurdyka \cite{ku98} on differentiable functions  laid the foundation of the K{\L} property, which was later extended to nonsmooth functions by Bolte et al. \cite{bot2,bdls07}. Very recently, by employing nonsmooth desingularizing functions, a generalized version of the concave K{\L} property, together with its exact modulus, was introduced and studied in \cite{ww22}.

In this paper, we intend to study the K{\L} property associated with the  widely used  desingularizing function  $\psi(s):=\frac{1}{\mu}s^{1-\theta}$ for some exponent $\theta\in [0, 1)$ and modulus $\mu>0$, in which case the corresponding K{\L} inequality can be   written as
\begin{equation}\label{def-kl-theta-mu}
d(0, \partial f(x))\geq \frac{\mu}{1-\theta}(f(x)-f(\bar{x}))^\theta.
\end{equation}
The K{\L} property with its associated exponent (in particular the case of $\theta=\frac12$) and the  modulus plays a key role in estimating local convergence rates of many first-order algorithms, see \cite{ab,abrs,abs, fgp,lp2016, lp2018} and references therein.  To further elaborate the role of the K{\L} property in the convergence analysis, the following questions may arise for the quadruple $(f, \bar{x}, \theta, \mu)$:
\begin{description}
  \item[(Q1)] How to verify  the K{\L} property for $f$ at $\bar{x}$ with exponent $\theta$? 
    \item[(Q2)] Is it possible to argue the {\bf sharpness} of $\theta$  in the sense that $f$ satisfies the K{\L} property at $\bar{x}$ with exponent $\theta$ but not
    with any exponent smaller than $\theta$?
      \item[(Q3)]  What is the threshold for the modulus, i.e., the supremum of all the possible modulus $\mu>0$, along with some $\epsilon>0$ and $\nu\in (0, +\infty]$,  such that (\ref{def-kl-theta-mu}) holds? For the sake of simplicity, we denote such a threshold  by K{\L}$(f,\bar{x}, \theta)$.
    \item[(Q4)] Beyond the   class of $\mathcal{C}^2$ functions as justified in \cite{abrs,zlx16}, is there any larger class of functions such that once $f$ falls into it with the quadratic growth condition   being fulfilled at $\bar{x}$,    the K{\L} property of  $f$ then holds  automatically at $\bar{x}$ with exponent $\theta=\frac12$?
\end{description}

Part of the answers to the above  questions can be found in the literature.

For some answers to {\bf (Q1)},  we refer the reader to \cite{abrs,bnpb,lp2018,lmp2015,lws,wpb,ylp,zlx16} and references therein. In most of   these references,  the K{\L} exponent $\theta=\frac12$ was  verified for a convex function or  a nonconvex function  with a structure. By developing several important calculus rules for the K{\L} exponent,
    Li and Pong \cite{lp2018} recently estimated the K{\L} exponents for   functions  appearing in structured optimization problems.  This line of research was further explored  by Yu et al. \cite{ylp} to show that the K{\L} exponent can be preserved under the inf-projection operation, which is a significant generalization of the operation of taking the minimum of finitely many functions.
    However, for the purpose of verifying the K{\L} property for general nonconvex functions $f$ having no particular structure, there was no condition or framework proposed  by virtue of the theory of generalized differentiation, such as the second subderivative $d^2f(\bar{x}\mid 0)$ and the subgradient graphical derivative $D(\partial f)(\bar{x}\mid 0)$ just reviewed for the quadratic growth condition.

         As for {\bf (Q2)}, to our best of knowledge, there was no such a result in the literature discussing sharpness of the K{\L} exponent.

        Part of the answers to {\bf (Q3)} in the case of $\theta=0$ can be found in \cite{ca2014, ca2016, ers, lmy, lmy20}  in terms of the outer limiting subdifferential $\partial^>f(\bar{x})$ of $f$ at $\bar{x}$, which says that  K{\L}$(f,\bar{x},0)$ is bounded from above by the distance of 0 from a lower estimate  of $\partial^>f(\bar{x})$ .      As a subset of the  limiting subdifferential $\partial f(\bar{x})$ of $f$ at $\bar{x}$,   the outer limiting subdifferential $\partial^>f(\bar{x})$  consists  of limiting subgradients of $f$ at $\bar{x}$ that are generated from subgradients of $f$ at  nearby points $x$ of $\bar{x}$  with $f(x)$ close enough to $f(\bar{x})$ from above, see Definition \ref{def-subgradients} below.           The notion of outer limiting subdifferentials was  first introduced by Ioffe and Outrata in \cite{io2008}  to formulate a sufficient condition for error bounds. It was  recently shown in \cite[Theorem 4.1]{mlyy21} that   the level-set mapping $S:\alpha\mapsto \{x\mid f(x)\leq \alpha\}$ has the Lipschitz-like property relative to $\{\alpha\mid \alpha\geq f(\bar{x})\}$ at $f(\bar{x})$ for $\bar{x}$ if and only if 0 does not belong to the outer limiting subdifferential $\partial^>f(\bar{x})$. This demonstrates  the ability of  $\partial^>f(\bar{x})$ to characterize stability properties.

Our main contributions  and the organization of the paper are as follows.

In Section \ref{sec-notation}, we first recall some necessary notation and  mathematical preliminaries, and then for an extended-real-valued function $f$ and a point $\bar{x}$ where $f$ is finite and locally lower semicontinuous and by utilizing the outer limiting subdifferential $\partial^>g(\bar{x})$ with $g(x):=(\max\{f(x)-f(\bar{x}), 0\})^{1-\theta}$  and $\theta\in [0, 1)$, we give answers  to {\bf (Q1)-(Q3)} in an abstract level   as follows:
\begin{description}
  \item[(A1)]$f$ satisfies  the K{\L} property at $\bar{x}$ with exponent $\theta$ if and only if $0\not\in \partial^>g(\bar{x})$.
    \item[(A2)]If $0\not\in \partial^>g(\bar{x})$ with $\partial^>g(\bar{x})\not=\emptyset$, then   $\theta$ is a {\bf sharp}  K{\L} exponent for $f$ at $\bar{x}$.
      \item[(A3)] The  K{\L}  modulus of $f$ at $\bar{x}$ with exponent $\theta$  coincides with the distance of 0 from $\partial^>g(\bar{x})$, i.e., K{\L}$(f,\bar{x}, \theta)=d(0, \partial^>g(\bar{x}))$. 
\end{description}
See Theorem \ref{theo-relation} and its proof below for more details.  The equivalent description of the K{\L} property in {\bf (A1)-(A3)} can be regarded as a limit form of the  K{\L} property, and hence a framework for the study of the  K{\L} property.

In Section \ref{sec-lower-esti}, for a given extended-real-valued function $f$ and a point $\bar{x}$ where $f$ is finite and locally lower semicontinuous,  we first show in Lemma \ref{lem-lower-esti} that,  the outer limiting subdifferential $\partial^>f(\bar{x})$  of $f$ at $\bar{x}$ includes as a subset the outer limiting subdifferential $\partial^>[df(\bar{x})](0)$ of the subderivative $df(\bar{x})$ at 0. This inclusion  can be used to recover the lower estimates  of  $\partial^>f(\bar{x})$ presented in \cite{ca2014, ca2016, ers} for  functions with a particular structure.   With the help of Lemma \ref{lem-lower-esti}, we further show in Theorem \ref{theo-good-lower-bound} that, the K{\L} property of $f$ at $\bar{x}$ with   exponent $\theta\in [0, 1)$ can be inherited by its $\frac{1}{1-\theta}$-th subderivative $h$ (defined by (\ref{theta-subderitive})) at $0$ with the same exponent $\theta$. This is done by showing that the union set (\ref{tongyong-lower}) is a subset of $\partial^> g(\bar{x})$.  Moreover,  in Theorem \ref{theo-good-lower-bound}, we also show that K{\L}$(f,\bar{x}, \theta)$ is bounded above by K{\L}$((1-\theta)h, 0, \theta)$, and that if there is $w$ such that $0<h(w)<+\infty$, then $\partial^>g(\bar{x})\not=\emptyset$, as required in {\bf (A2)},  can be verified. Note that the $\frac{1}{1-\theta}$-th subderivative $h$ reduces  to  the (first) subderivative $df(\bar{x})$ when $\theta=0$  and to the second subderivative $d^2f(\bar{x}\mid 0)$ when $\theta=\frac12$.

  In Section \ref{sec-prox-twice-diff}, we focus on case of $\theta=\frac12$ .  If $f$ is prox-regular at $\bar{x}$ for 0 and $D(\partial f)(\bar{x}\mid 0)$ is nonsingular in the sense that $[D(\partial f)(\bar{x}\mid 0)]^{-1}(0)=\{0\}$, we show in Theorem \ref{theo-prox-regular-twice-epidiff}   that $f$ satisfies the K{\L} property at $\bar{x}$ with exponent $\frac12$, which is {\bf sharp} if there is some $w$ such that $0<d^2f(\bar{x}\mid 0)(w)<+\infty$. This is done by verifying $0\not\in \partial^>g(\bar{x})$ with   $g(x):=\sqrt{\max\{f(x)-f(\bar{x}), 0\}}$  and then relying on  {\bf (A1)} and {\bf (A2)}. In the circumstance that $f$ is not only prox-regular but also twice epi-differentiable and subdifferentially continuous at $\bar{x}$ for 0, we show in Theorem \ref{theo-prox-regular-twice-epidiff} {\bf (a)} that  the union set (\ref{tongyong-lower}) with $\theta=\frac12$ turns out to be equal to $\partial^> g(\bar{x})$ and that the modulus K{\L}$(f,\bar{x}, \frac12)$ coincides with its second subderivative counterpart K{\L}$(\frac12 d^2f(\bar{x}\mid 0), 0, \frac12)$. Similar results for the Moreau envelope functions $e_\lambda f$ for all $\lambda>0$ sufficiently small are presented in Theorem \ref{theo-prox-regular-twice-epidiff} {\bf (b)} and some connections between the K{\L} moduli of $f$ and  $e_\lambda f$  are established in  Theorem \ref{theo-prox-regular-twice-epidiff} {\bf (c)}.  As a direct consequence of Theorem \ref{theo-prox-regular-twice-epidiff}, we provide in Corollary \ref{cor-second-order-growth}  an answer to {\bf (Q4)} as follows:
\begin{description}
  \item[(A4)] Whenever $f$ falls into the class of prox-regular, twice epi-differentiable and subdifferentially continuous functions,
    the quadratic growth condition  at $\bar{x}$ guarantees  the K{\L} property of  $f$ at $\bar{x}$ with exponent $\theta=\frac12$.
\end{description}

In Section \ref{sec-basic-example},  we apply Theorems \ref{theo-relation} and \ref{theo-prox-regular-twice-epidiff}  to  establish the K{\L} property with exponent $\frac12$ and to provide   calculation of  K{\L}$(f,\bar{x}, \frac12)$ for  functions often appearing  in optimization problems:  smooth functions, the pointwise max of finitely many smooth functions,
   and the $\ell_p$ ($0<p\leq 1$) regularized functions.

\section{Notation and Mathematical Preliminaries} \label{sec-notation}
Throughout the paper we use the standard notation of variational analysis; see the seminal book \cite{roc} by Rockafellar and Wets.
The Euclidean norm of a vector $x$ is denoted by $||x||$, and the inner product of vectors $x$ and $y$ is denoted by $\langle x, y\rangle$.
   Let $\B$ denote the closed unit Euclidean ball and let $\Sph$ denote the unit sphere. We denote by $\B_\delta(x)$ the closed ball centered at $x$ with radius $\delta>0$.

 Let $A\subset \R^n$ be a nonempty set. We say that $A$ is locally closed at a point $x\in A$ if $A\cap U$ is closed for some closed neighborhood $U$ of $x$. We denote the interior,  the relative interior  and the closure  of $A$ respectively by $\inte A$,  $\ri A$ and $\cl A$.  
The distance from $x$ to $A$ is defined by
$
d(x,A):=\inf_{y\in A}||y-x||.
$
The indicator function $\delta_A$ of $A$ is defined by
\[
\delta_A(x):=\left\{
\begin{array}{ll}
0 &\mbox{if}\;x\in A,\\
+\infty &\mbox{otherwise}.
\end{array}
\right.
\]
Let $x\in A$. The notions of tangent cone and various normal cones are as follows.
\begin{description}
\item[(a)]   The  tangent  cone to $A$ at $x$ is denoted by  $T_A(x)$, i.e. $w\in T_A(x)$ if there exist sequences $t_k\downarrow 0$
and  $w_k\rightarrow w$ with  $x+ t_k w_k\in A$ for all $k$.
\item[(b)] The regular   normal cone, $\widehat{N}_A(x)$,  to $A$ at $x$ is the polar cone of $T_A(x)$. That is, $v\in \widehat{N}_A(x)$ if and only if $\langle v, w\rangle\leq 0$ for all $w\in T_A(x)$.
\item[(c)] A vector $v\in \R^n$ belongs to the   limiting  normal cone $N_A(x)$ to $A$ at $x$,
if there exist sequences $x_k\to x$ and $v_k\to v$ with $x_k\in A$ and $v_k\in \widehat{N}_A(x_k)$ for all $k$.
\end{description}

Let $f:\R^n\to \overline{\R}:=\R\cup\{\pm \infty\}$ be an extended real-valued function.  We denote the epigraph and the domain of $f$ by
\[
\epi f:=\{(x,\alpha)\mid f(x)\leq \alpha\}\quad\mbox{and}\quad \dom f:=\{x\mid f(x)<+\infty\},
\]
respectively.  We call $f$ a proper function if $f(x)<+\infty$ for at least one $x\in \R^n$, and $f(x)>-\infty$ for all $x\in \R^n$. $f$ is said to be lower semicontinuous (for short, lsc) if $\epi f$ is closed.

Let $\bar{x}$ be  a point with $f(\bar{x})$ finite.  $f$ is said to be locally lsc  at $\bar{x}$, if there is an $\epsilon > 0$ such that
all sets of the form $\{x\mid \|x-\bar{x}\|\leq \epsilon,\,f(x)\leq \alpha\}$ with $\alpha\leq f(\bar{x})+\epsilon$ are closed, see \cite[Definition 1.33]{roc}.  It is well-known that $f$ is locally lsc at $\bar{x}$ if and only if $\epi f$ is locally closed at $\left(\bar{x}, f(\bar{x})\right)$.

 We say that $f$ satisfies the quadratic growth condition at $\bar{x}$, if there is some $\kappa>0$ such that the inequality $f(x)\geq f(\bar{x})+\frac{\kappa}{2}\|x-\bar{x}\|^2$ holds  for all $x$ close enough to $\bar{x}$.

\begin{definition}\label{def-subgradients}
The notions of various subgradients  are defined as follows.
\begin{description}
\item[(a)] A vector $v\in \R^n$ is called a proximal subgradient of $f$ at $\bar{x}$, if there exist $\rho>0$ and $\delta>0$ such that
$
f(x)\geq f(\bar{x})+\langle v, x-\bar{x}\rangle -\frac12 \rho\|x-\bar{x}\|^2$ for all $x\in \B_\delta(\bar{x})$.
\item[(b)] A vector $v\in \R^n$ is a regular  subgradient of $f$ at $\bar{x}$, written $v\in \widehat{\partial} f(\bar{x})$, if
$f(x)\geq f(\bar{x})+\langle v, x-\bar{x}\rangle+o(||x-\bar{x}||)$. Equivalently, $v\in \widehat{\partial} f(\bar{x})$ if and only if
\[
\liminf_{x\to \bar{x}, \,x\not=\bar{x}}\frac{f(x)-f(\bar{x})-\langle v, x-\bar{x}\rangle}{\|x-\bar{x}\|}\geq 0.
\]
\item[(c)] A vector $v\in \R^n$ is a limiting subgradient of $f$ at $\bar{x}$, written $v\in \partial f(\bar{x})$, if there exist sequences $x_k\to \bar{x}$ and $v_k\to v$ with $f(x_k)\to f(\bar{x})$ and $v_k\in \widehat{\partial} f(x_k)$.
\item[(d)]  A vector $v\in \R^n$ is an outer limiting subgradient of $f$ at $\bar{x}$, written $v\in \partial^>f(\bar{x})$, if there exist sequences $x_k\to \bar{x}$ and $v_k\to v$ with $f(x_k)\to f(\bar{x})$, $f(x_k)>f(\bar{x})$ and $v_k\in  \partial  f(x_k)$.
\end{description}
\end{definition}
The outer limiting subdifferential set  $\partial^>f(\bar{x})$  of $f$ at $\bar{x}$ was first introduced in \cite{io2008}, see also \cite{knt10}.
   A closely related notion, called the right-sided subdifferential, was given in \cite{mor05} and defined by using a weak inequality $f(x_k)\geq f(\bar{x})$ instead of the strict inequality  $f(x_k)>f(\bar{x})$ used here. See  \cite[Definition 1.100 and Theorem 1.101]{mor} for more details on  the right-sided subdifferential and its applications.

For a set-valued mapping $S:\R^n\rightrightarrows \R^m$, we denote by
\[
\gph S:=\{(x, u)\mid u\in S(x)\}\quad\mbox{and}\quad \dom S:=\{x\mid S(x)\not=\emptyset\}
\]
  the graph   and the domain of $S$, respectively.
  The inverse mapping $S^{-1}:\R^m\rightrightarrows \R^n$ is defined by setting $S^{-1}(u):=\{x\mid u\in S(x)\}$.
  \begin{definition}
Consider a set-valued mapping $S:\R^n\rightrightarrows \R^m$ and a point $(\bar{x}, \bar{u})\in \gph S$.  The graphical  derivative of $S$ at $\bar{x}$ for $\bar{u}$ is
the mapping $DS(\bar{x}\mid \bar{u}):\R^n\rightrightarrows \R^m$ defined by
\[
z\in DS(\bar{x}\mid \bar{u})(w)\Longleftrightarrow
(w, z)\in T_{\gph S}(\bar{x}, \bar{u}).\]
The notation $DS(\bar{x}\mid \bar{u})$ is simplified to $DS(\bar{x})$ when $S$ is single-valued
  at $\bar{x}$, i.e., $S(\bar{x})=\bar{u}$.
\end{definition}

Following \cite[section 3H]{dr14}, we say that $S$ is metrically subregular at $\bar{x}$ for $\bar{u}\in S(\bar{x})$ with modulus $\kappa>0$ if
there exists a neighborhood $U$ of $\bar{x}$ such that $d(x, S^{-1}(\bar{u}))\leq \kappa d(\bar{u}, S(x))$ for all $x\in U$. The infimum of all
such $\kappa$ is the modulus of metrically subregularity, denoted by ${\rm subreg}\,S(\bar{x}\mid\bar{u})$. If in addition $\bar{x}$ is an isolated point of $S^{-1}(\bar{u})$, we say that $S$ is strongly metrically subregular  at $\bar{x}$ for $\bar{u}$. It is known from \cite[Theorem 4E.1]{dr14} that  $S$ is strongly metrically subregular  at $\bar{x}$ for $\bar{u}$ if and only if $DS(\bar{x}\mid \bar{u})^{-1}(0)=\{0\}$. Moreover, in the latter case, its modulus of (strong) metric subregularity is computed by
  \begin{equation}\label{def-smallest-norm}
{\rm subreg}\,S(\bar{x}\mid\bar{u})=1/\inf\{\|v\|\mid v\in DS(\bar{x}\mid \bar{u})(w),\,w\in \Sph\}.
  \end{equation}
As $DS(\bar{x}\mid \bar{u})^{-1}(0)=\{0\}$ means the only way to get $0\in DS(\bar{x}\mid \bar{u})(w)$ is to take $w=0$,  we say     that  the mapping $DS(\bar{x}\mid \bar{u})$ is nonsingular when $DS(\bar{x}\mid \bar{u})^{-1}(0)=\{0\}$. Following \cite{chnt21}, we say that the mapping $DS(\bar{x}\mid \bar{u})$ is positive definite if $\langle v, w\rangle>0$ for all $v\in DS(\bar{x}\mid \bar{u})(w)$ with $w\not=0$.

\begin{definition}[first and second subderivatives]\label{def-subd}
Consider a function $f:\R^n\to \overline{\R}$,  a point $\bar{x}\in \R^n$ with $f(\bar{x})$ finite and some $v\in \R^n$.
\begin{description}
\item[(a)]  The (first) subderivative of $f$ at $\bar{x}$ is  defined by
\[
df(\bar{x})(w):=\liminf_{t\downarrow 0, w'\to w}\frac{f(\bar{x}+tw')-f(\bar{x})}{t}.
\]
\item[(b)] The second subderivative of $f$ at $\bar{x}$ for   $v $ is   defined by
\[
d^2f(\bar{x}\mid v)(w):=\liminf_{t\downarrow 0, w'\to w}\frac{f(\bar{x}+tw')-f(\bar{x})-t\langle v, w'\rangle}{\frac12 t^2}.
\]
\item[(c)] $f$ is said to be twice epi-differentiable at $\bar{x}$ for $v$, if for every $w\in \R^n$ and choice of $t^\nu\downarrow 0$ there exists some $w^\nu\to w$ such that
\[
d^2f(\bar{x}\mid v)(w)=\lim_{\nu\to \infty}\frac{f(\bar{x}+t^\nu w^\nu)-f(\bar{x})-t^\nu\langle v, w^\nu\rangle}{\frac12 (t^\nu)^2}.
\]
\end{description}
\end{definition}

Following \cite{abrs,bot2,lp2018}, we introduce the notion of the  K{\L} property  associated with the  widely used desingularizing function  $\psi(s):=\frac{1}{\mu}s^{1-\theta}$ for some exponent $\theta\in [0, 1)$ and modulus $\mu>0$. For some later developments,  we also define the threshold for the modulus and the sharpness of the   exponent.
\begin{definition}\label{def-kl}
Consider a function $f:\R^n\to \overline{\R}$ and a point $\bar{x}$ where $f(\bar{x})$ is finite and locally lsc.
\begin{description}
    \item[(i)] We say that  $f$ satisfies the K{\L} property   at $\bar{x}$ with   exponent $\theta\in [0, 1)$ and  modulus $\mu>0$, if  there exist some  $\epsilon>0$ and  $\nu\in (0,+\infty]$ so that
\begin{equation}\label{def-kl-ori}
d(0, \partial f(x))\geq \frac{\mu}{1-\theta} \left(\max\{f(x)-f(\bar{x}), 0\}\right)^\theta
\end{equation}
whenever $\|x-\bar{x}\|\leq \epsilon$ and $f(x)<f(\bar{x})+\nu$.
\item[(ii)] Let $\theta\in [0, 1)$.  We define  by K{\L}$(f,\bar{x}, \theta)$ the supremum of all possible $\mu>0$, associated with some $\epsilon>0$ and $\nu\in (0, +\infty]$, such that (\ref{def-kl-ori}) holds for all $x\in \R^n$ with $\|x-\bar{x}\|\leq \epsilon$ and $f(x)<f(\bar{x})+\nu$, where the convention  $\sup\emptyset:=0$ is used.
   We call K{\L}$(f,\bar{x}, \theta)$ the  K{\L} modulus  of $f$ at $\bar{x}$ with exponent $\theta$.  Alternatively, we have \begin{equation}\label{kl-modulus-def}
 K{\L}(f,\bar{x}, \theta):=\liminf_{x\to_f \bar{x},\, f(x)>f(\bar{x})}\frac{(1-\theta)d(0, \partial f(x))}{(f(x)-f(\bar{x}))^\theta},
\end{equation}
where the lower limit is set to be $+\infty$  if there is no $x_k\to \bar{x}$ with $f(x_k)\to f(\bar{x})+$.
\item[(iii)]  If $f$ satisfies   the K{\L} property    at $\bar{x}$ with   exponent $\theta\in (0, 1)$
but not with any exponent  $\theta'\in [0, \theta)$, we say that $\theta$ is a {\bf sharp}
K{\L}  exponent of $f$ at $\bar{x}$. By convention we say that 0 is a {\bf sharp}   K{\L}
exponent of $f$ at $\bar{x}$,  if   $f$   satisfies   the K{\L} property    at $\bar{x}$ with exponent 0 and $\partial^>f(\bar{x})\not=\emptyset$.
\end{description}
\end{definition}

In terms of outer limiting subgradients, the limit form of the K{\L} property  and its characteristics on exponents and moduli  can be given as follows.
\begin{theorem}\label{theo-relation}  Consider a function $f:\R^n\rightarrow \overline{\R}$ and a point $\bar{x}\in \R^n$ where $f$ is finite and  locally lsc.   Let $0\leq\theta<1$ and let $g(x):=(\max\{f(x)-f(\bar{x}), 0\})^{1-\theta}$. Then we have
\begin{equation}\label{wcwf}
\partial^> g(\bar{x})=\limsup_{x\to _f\;  \bar{x}, f(x)>f(\bar{x})} \frac{(1-\theta)\partial f(x)}{(f(x)-f(\bar{x}))^\theta},
\end{equation}
and the following   hold:
\begin{description}
\item[(a)] $f$ satisfies the K{\L} property  at $\bar{x}$ with   exponent   $\theta$  if and only if
$
0\not\in \partial^>g(\bar{x})
$.
\item[(b)] If $\partial^> g(\bar{x})\not=\emptyset$, then
$0\in \partial^>\tilde{g}(\bar{x})$ holds for all $\tilde{g}(x):=(\max\{f(x)-f(\bar{x}), 0\})^{1-\tilde{\theta}}$ with $\tilde{\theta}<\theta$.  
\item[(c)]  $f$ satisfies the K{\L} property  at $\bar{x}$ with a {\bf sharp} exponent $\theta$ if
$0\not\in \partial^> g(\bar{x})$ with $\partial^> g(\bar{x})\not=\emptyset
$.
\item[(d)]
K{\L}$(f,\bar{x}, \theta)= d\left(0, \partial^> g(\bar{x})\right)$.
\end{description}
\end{theorem}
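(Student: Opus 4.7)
The plan is to first establish the identity (\ref{wcwf}) through a chain-rule computation for $\partial g(x)$ at points $x$ with $f(x) > f(\bar x)$, and then derive (a)--(d) from it by working with distance functions and the alternative formula (\ref{kl-modulus-def}).

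For (\ref{wcwf}), notice that $g = \phi \circ f$ with $\phi(t) = (\max\{t - f(\bar x), 0\})^{1-\theta}$. At any point $x$ with $f(x) > f(\bar x)$, the local lower semicontinuity of $f$ provides a neighborhood of $x$ on which $f$ still exceeds $f(\bar x)$, so $\phi$ coincides locally with the $\mathcal{C}^1$ function $t \mapsto (t - f(\bar x))^{1-\theta}$ whose derivative $\phi'(f(x)) = (1-\theta)(f(x) - f(\bar x))^{-\theta}$ is strictly positive. Applying the smooth chain rule for the limiting subdifferential (e.g., \cite[Exercise 10.7]{roc}) yields
\begin{equation*}
\partial g(x) \;=\; \frac{1-\theta}{(f(x) - f(\bar x))^\theta}\,\partial f(x).
\end{equation*}
Since $g(\bar x) = 0$ and $g \geq 0$, the requirements $g(x_k) > g(\bar x)$ and $g(x_k) \to g(\bar x)$ in the definition of $\partial^> g(\bar x)$ translate exactly to $f(x_k) > f(\bar x)$ and $f(x_k) \to f(\bar x)+$. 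Substituting the above formula for $\partial g(x_k)$ into the definition of $\partial^> g(\bar x)$ produces (\ref{wcwf}).

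Given (\ref{wcwf}), I obtain (d) from the elementary identity
\begin{equation*}
d\bigl(0, \partial^> g(\bar x)\bigr) \;=\; \liminf_{x \to_f \bar x,\, f(x) > f(\bar x)} \frac{(1-\theta)\,d(0, \partial f(x))}{(f(x) - f(\bar x))^\theta},
\end{equation*}
which expresses the distance from $0$ to an outer limit of sets as the lower limit of the corresponding distances (with the convention $d(0, \emptyset) = +\infty$); the right-hand side is exactly $K{\L}(f, \bar x, \theta)$ by (\ref{kl-modulus-def}). Conclusion (a) then follows because $\partial^> g(\bar x)$ is closed, so $d(0, \partial^> g(\bar x)) > 0$ is equivalent to $0 \notin \partial^> g(\bar x)$, and, by Definition \ref{def-kl}(i)--(ii), this is equivalent to (\ref{def-kl-theta-mu}) holding for some $\mu > 0$ on a neighborhood. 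For (b), pick $v \in \partial^> g(\bar x)$ with realizing sequences $x_k \to \bar x$, $f(x_k) \to f(\bar x)+$, and $w_k \in \partial f(x_k)$ satisfying $(1-\theta)(f(x_k) - f(\bar x))^{-\theta} w_k \to v$. Then for each $\tilde\theta \in [0, \theta)$,
\begin{equation*}
(1-\tilde\theta)(f(x_k) - f(\bar x))^{-\tilde\theta} w_k \;=\; \frac{1-\tilde\theta}{1-\theta}\,(f(x_k) - f(\bar x))^{\theta - \tilde\theta}\cdot(1-\theta)(f(x_k) - f(\bar x))^{-\theta}\,w_k \;\longrightarrow\; 0,
\end{equation*}
since $\theta - \tilde\theta > 0$ and $f(x_k) - f(\bar x) \to 0$, whence $0 \in \partial^> \tilde g(\bar x)$. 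Finally, (c) follows by combining (a) applied at $\theta$ with (a) applied at each $\tilde\theta < \theta$ via (b); when $\theta = 0$ one has $\partial^> g(\bar x) = \partial^> f(\bar x)$, so (c) reduces precisely to the convention in Definition \ref{def-kl}(iii).

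The main technical point I expect to be the main obstacle is the chain-rule step: I need the local lower semicontinuity of $f$ to guarantee that on a neighborhood of each $x$ with $f(x) > f(\bar x)$ the function $\phi \circ f$ really is the composition of $f$ with a function that is $\mathcal{C}^1$ on a full neighborhood of $f(x)$, so that the smooth chain rule for limiting subdifferentials produces the above formula as an equality rather than an inclusion. All remaining steps are routine limit manipulations with outer limits and distance functions.
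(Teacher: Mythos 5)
Your proposal is correct and follows essentially the same route as the paper: both arguments rest on the formula $\partial g(x)=\frac{(1-\theta)\partial f(x)}{(f(x)-f(\bar x))^{\theta}}$ for $x$ with $f(x)>f(\bar x)$ (which the paper simply cites from Yao--Zheng, Proposition 2.1, while you derive it via the chain rule for a locally $\mathcal{C}^1$ increasing outer function), and then obtain (\ref{wcwf}) from the definition of $\partial^{>}g(\bar x)$, part \textbf{(d)} from (\ref{kl-modulus-def}), part \textbf{(a)} from \textbf{(d)} and closedness, part \textbf{(b)} by the identical rescaling limit $(f(x^k)-f(\bar x))^{\theta-\tilde\theta}\to 0$, and part \textbf{(c)} by combining \textbf{(a)} and \textbf{(b)}. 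The distance-to-outer-limit identity you use for \textbf{(d)} is a standard fact and is implicitly what the paper means by ``follows immediately,'' so there is no gap.
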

\begin{proof} As $f$ is assumed to be lsc, we get from \cite[Proposition 2.1]{yz}  that for all $x\in \R^n$ with $f(x)>f(\bar{x})$,
$
\partial  g(x)=\frac{(1-\theta)\partial f(x)}{(f(x)-f(\bar{x}))^\theta}
$, by virtue of which, the formula (\ref{wcwf}) follows immediately from the definition of the outer limiting subdifferential set.
By  (\ref{kl-modulus-def}) and (\ref{wcwf}), we get the equality in {\bf (d)}, from which the equivalence in {\bf (a)} follows in a straightforward way.
In view of the definition of  sharpness of the K{\L} exponent, we get {\bf (c)}  from {\bf (a)} and {\bf (b)} immediately. 
It remains to show {\bf (b)}. Let $v\in \partial^>g(\bar{x})$ and let $\tilde{\theta}<\theta$.   By definition  there exist $x^k\to_f \bar{x}$ with $f(x^k)>f(\bar{x})$ and  $v_k\in \partial f(x^k)$
such that $\frac{(1-\theta)v_k}{(f(x^k)-f(\bar{x}))^\theta}\to v$ implies 
\[
 \frac{(1-\theta)v_k}{(f(x^k)-f(\bar{x}))^{\tilde{\theta}}}= \frac{(1-\theta)v_k}{(f(x^k)-f(\bar{x}))^{\theta}}(f(x^k)-f(\bar{x}))^{\theta-\tilde{\theta}}\to 0,
\]
which implies by definition and \cite[Proposition 2.1]{yz}  that $0\in \partial^> \tilde{g}(\bar{x})$. This completes the proof. \end{proof}

\section{Preservation of the K{\L} property in passing to the subderivative}\label{sec-lower-esti}
In this section, we are given some  $\theta\in [0, 1)$,  a function $f:\R^n\rightarrow \overline{\R}$ and  a point  $\bar{x}$ where $f$ is finite and locally lsc. 
We will show in Theorem \ref{theo-good-lower-bound} that the K{\L} property of $f$ at $\bar{x}$ with exponent $\theta$ can be preserved by its $\frac{1}{1-\theta}$-th subderivative (cf. (\ref{theta-subderitive}))  at 0. This is done by first showing in Lemma \ref{lem-lower-esti} that,  the outer limiting subdifferential of $f$ at $\bar{x}$ includes as a subset the outer limiting subdifferential of its (first) subderivative $df(\bar{x})$ at 0, which is a result of independent interest.

\begin{lemma}\label{lem-lower-esti} Suppose that $f:\R^n\rightarrow \overline{\R}$ is finite and locally lsc at $\bar{x}$. In terms of $h:=df(\bar{x})$ and $W:=\{w\in \R^n\mid 0<h(w)<+\infty\}$, the following hold:
\begin{description}
\item[(a)] For each $w\in \R^n$ and  $v\in \widehat{\partial}h(w)$, there exist $t_l\to 0+$, $w_l\to w$ and  $v_l\to v$  such that  $v_l\in
\widehat{\partial}f(\bar{x}+t_lw_l)$ for all $l$  and
$h(w)=\lim_{l\to  \infty}\frac{f(\bar{x}+t_lw_l)-f(\bar{x})}{t_l}$.
\item[(b)] If $h$ is proper (i.e., h(0)=0),  then for  any $\bar{w}\in W$,
\begin{equation}\label{best_lower}
\partial h(\bar{w})\subset \partial^>h(0)\subset \partial^>f(\bar{x}),
\end{equation}
where $\partial^>h(0)=\displaystyle\cl\bigcup_{w\in W }\partial h(w)$ is nonempty if and only if  $W\not=\emptyset$.
\item[(c)] If $h$ is improper (i.e., $h(0)=-\infty$),   the results in {\bf (b)} hold  with $\partial^>h_+(0)$ in place of  $\partial^>h(0)$.
\end{description}
\end{lemma}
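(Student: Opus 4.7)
The plan is to handle part (a) as the engine of the lemma, since parts (b) and (c) both reduce to positive-homogeneity arguments once (a) is available.

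\textbf{Part (a).} I would work through the epigraphical identity $\epi h = T_{\epi f}(\bar{x}, f(\bar{x}))$, which holds because $f$ is locally lsc at $\bar{x}$ (Rockafellar--Wets, Proposition 8.2). Hence $v \in \widehat{\partial}h(w)$ translates into $(v,-1) \in \widehat{N}_{T_{\epi f}(\bar{x}, f(\bar{x}))}(w, h(w))$. Then I would appeal to the standard representation of regular normals to a tangent cone as limits of regular normals on the underlying set along rescaled sequences (in the spirit of the developments in Section 6 of Rockafellar--Wets): this yields $t_l \downarrow 0$, $(w_l, \alpha_l) \to (w, h(w))$ and $v_l \to v$ with $(v_l,-1) \in \widehat{N}_{\epi f}(\bar{x}+t_l w_l, f(\bar{x})+t_l \alpha_l)$ and the points lying on $\epi f$. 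The ``$-1$'' dual component forces these points to sit on the graph of $f$, so $f(\bar{x}+t_l w_l) = f(\bar{x}) + t_l \alpha_l$, which simultaneously gives $v_l \in \widehat{\partial}f(\bar{x}+t_l w_l)$ and the ratio limit $\alpha_l \to h(w)$.

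\textbf{Part (b).} Because $h = df(\bar{x})$ is positively homogeneous of degree one, a direct computation from the definition shows $\widehat{\partial}h(tw) = \widehat{\partial}h(w)$ for all $t>0$, hence $\partial h(tw) = \partial h(w)$. Given $\bar{w} \in W$ and $v \in \partial h(\bar{w})$, pick a defining sequence $(w_k, v_k)$ with $v_k \in \widehat{\partial}h(w_k)$, $w_k \to \bar{w}$, $v_k \to v$, $h(w_k) \to h(\bar{w})$; lsc of $h$ and $h(\bar{w}) > 0$ put $w_k \in W$ eventually. Choosing any $s_k \downarrow 0$, the pairs $(s_k w_k, v_k)$ witness $v \in \partial^>h(0)$, giving $\partial h(\bar{w}) \subset \partial^>h(0)$. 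The reverse inclusion $\partial^>h(0) \subset \cl \bigcup_{w \in W}\partial h(w)$ is immediate from the definition of $\partial^>$, and closedness of $\partial^>h(0)$ yields the claimed equality. For $\partial^>h(0) \subset \partial^>f(\bar{x})$, combine part (a) with the scaling: for $v \in \widehat{\partial}h(w)$ with $w \in W$, part (a) produces $v_l \in \widehat{\partial}f(\bar{x}+t_l w_l) \subset \partial f(\bar{x}+t_l w_l)$ converging to $v$, while $\alpha_l \to h(w) > 0$ forces $f(\bar{x}+t_l w_l) > f(\bar{x})$ and $f(\bar{x}+t_l w_l) \to f(\bar{x})$, so $v \in \partial^>f(\bar{x})$; closedness of $\partial^>f(\bar{x})$ then extends this through the closure defining $\partial^>h(0)$. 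Nonemptiness when $W \neq \emptyset$ comes from density of $\widehat{\partial}h$ in $\dom h$ (Rockafellar--Wets, Corollary 8.10) together with openness of $W$ in $\dom h$ afforded by the lsc of $h$; conversely, $W = \emptyset$ precludes any admissible defining sequence for an outer limiting subgradient at $0$.

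\textbf{Part (c) and main obstacle.} When $h(0) = -\infty$, replacing $h$ by $h_+ := \max\{h, 0\}$ gives a proper lsc function that coincides with $h$ on the open set $\{h > 0\}$ (using lsc of $h$), so $W$ and $\partial h$ on $W$ are unchanged, and the argument of part (b) transfers verbatim with $h_+$ in place of $h$. The main obstacle I anticipate is the first step of part (a): the normals-to-tangent-cone representation must be invoked so that the ``$-1$'' dual component persists through the limit, ensuring the approximating vectors are genuine regular subgradients of $f$ rather than regular normals to $\epi f$ at points strictly above the graph; every subsequent manipulation is routine positive-homogeneity and closure bookkeeping.
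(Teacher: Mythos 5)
Your parts (b) and (c) are essentially sound and follow the same route as the paper (degree-one positive homogeneity giving $\partial h(tw)=\partial h(w)$, the scaling sequence $s_kw_k$, closedness of $\partial^>f(\bar{x})$, and \cite[Corollary 8.10]{roc} for nonemptiness; the paper handles (c) by passing to $f_+:=\max\{f-f(\bar{x}),0\}$ rather than to $h_+$ directly, but that is cosmetic). The problem is part (a), which you correctly identify as the engine of the lemma and then do not actually prove. The ``standard representation of regular normals to a tangent cone as limits of regular normals on the underlying set along rescaled sequences'' that you propose to invoke is not a citable result: Section 6 of Rockafellar--Wets gives the polarity $\widehat{N}_C(\bar{x})=\widehat{N}_{T_C(\bar{x})}(0)$ at the apex and the definition of limiting normals as limits of regular normals at nearby points of $C$, but it contains no theorem representing $\widehat{N}_{T_C(\bar{x})}(w)$ at a \emph{nonzero} point $w$ of the tangent cone by regular normals to $C$ at points of the form $\bar{x}+t_lw_l$ with $t_l\downarrow 0$ and $w_l\to w$. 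Restated through $\epi h=T_{\epi f}(\bar{x},f(\bar{x}))$, that claimed representation \emph{is} part (a); you have translated the statement into geometric language and then asserted it, which is circular. (The closest existing result, \cite[Exercise 8.44]{roc}, gives only $\partial h(\bar{w})\subset\partial f(\bar{x})$ and does not track the function values $\frac{f(\bar{x}+t_lw_l)-f(\bar{x})}{t_l}\to h(w)$, which is exactly the extra information needed to land in the \emph{outer} limiting subdifferential in part (b).)

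The paper supplies this missing content by a genuine variational construction: starting from a sequence $\bar{t}_k\downarrow 0$, $\bar{w}_k\to w$ realizing $h(w)$, it minimizes the penalized functions $F_{k,l}(x)=f(x)-f(\bar{x})-\langle v,x-\bar{x}\rangle+\frac{1}{\alpha_l\bar{t}_k}\|x-(\bar{x}+\bar{t}_k\bar{w}_k)\|^2$ over small balls $\bar{x}+\bar{t}_k\bar{w}_k+\bar{t}_k\alpha_l\B$, and the hypothesis $v\in\widehat{\partial}h(w)$ is used precisely to force the limiting perturbation $u^*$ to vanish, so that a diagonal choice of minimizers lies in the interior of the ball, yielding $v-2u_{k(l),l}\in\widehat{\partial}f(\bar{x}+t_lw_l)$ together with the required limit of difference quotients. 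If you want to complete your proof you must supply an argument of this kind (or an equivalent Ekeland-type perturbation in the epigraph); it cannot be outsourced to Chapter 6 of Rockafellar--Wets.
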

\begin{proof} We first show {\bf (a)}.   Let $w\in \R^n$ and let $v\in \widehat{\partial}h(w)$ be given arbitrarily. By the  definition of subderivative, there exist some   $\bar{t}_k\to 0+$ and $\bar{w}_k\to w$ such that
\begin{equation}\label{subderivative}
h(w)=\lim_{k\to \infty}\frac{f(\bar{x}+\bar{t}_k \bar{w}_k)-f(\bar{x})}{\bar{t}_k}.
\end{equation}
Let $\alpha_l\to 0+$ as $l\to +\infty$. For each $k$  and $l$, we consider the minimization of
\[
F_{k,l}(x):=f(x)-f(\bar{x})-\langle v, x-\bar{x}\rangle+ \frac{1}{\alpha_l \bar{t}_k}||x-(\bar{x}+ \bar{t}_k \bar{w}_k)||^2
\]
over the compact set $X:=\bar{x}+\bar{t}_k \bar{w}_k+ \bar{t}_k \alpha_l\B$. Since $F_{k,l}$ is locally lsc at $\bar{x}$ due to $f$ being assumed to be locally lsc at $\bar{x}$, the optimal solution set $\argmin_{x\in X}F_{k,l}(x)$ is nonempty. For each $k$ and $l$,  let $u_{k,l}\in \B$ be such that $\bar{x}+\bar{t}_k \bar{w}_k+ \bar{t}_k \alpha_l u_{k,l}\in  \argmin_{x\in X}F_{k,l}(x)$. Then we have
$F_{k,l}(\bar{x}+\bar{t}_k \bar{w}_k+\bar{t}_k \alpha_l u_{k,l})\leq  F_{k,l}(\bar{x}+\bar{t}_k \bar{w}_k)$, or explicitly
\begin{equation}\label{baoyu}
\frac{f(\bar{x}+\bar{t}_k \bar{w}_k+ \bar{t}_k \alpha_l u_{k,l})-f(\bar{x})}{\bar{t}_k}\leq \frac{f(\bar{x}+\bar{t}_k \bar{w}_k)-f(\bar{x})}{\bar{t}_k}+\langle v, \alpha_l u_{k,l}\rangle- \alpha_l||u_{k,l}||^2\quad \forall k, l.
\end{equation}
Since $u_{k,l}\in  \B$ for all $k$ and $l$,  by taking a subsequence if necessary, we may assume that for each $l$, there is some $u_l$ such that  $u_{k,l}\to u_l\in \B $ as $k\to +\infty$. Then by letting  $k\to \infty$, it follows  from  (\ref{subderivative}) and (\ref{baoyu}) that
$ 
\limsup_{k\to +\infty}\frac{f(\bar{x}+\bar{t}_k \bar{w}_k+ \bar{t}_k \alpha_l u_{k,l})-f(\bar{x})}{\bar{t}_k}\leq h(w)+ \langle v, \alpha_l u_l \rangle- \alpha_l ||u_l||^2$  for all $l$. By the definition of subderivative, we further have
$
h(w+\alpha_l u_l)\leq \liminf_{k\to +\infty}\frac{f(\bar{x}+\bar{t}_k \bar{w}_k+ \bar{t}_k \alpha_l u_{k,l})-f(\bar{x})}{\bar{t}_k}$  for all $l$. 
Therefore,  we get
$
h(w+\alpha_l u_l)\leq h(w)+ \langle v, \alpha_l u_l \rangle- \alpha_l ||u_l||^2$ for all $l$, or   alternatively  that,
\begin{equation}\label{bnmh}
\frac{h(w+\alpha_l u_l)-h(w)}{\alpha_l}\leq \langle v, u_l \rangle- ||u_l||^2\quad \forall l.
\end{equation}
Since   $u_l\in \B$ for all $l$,   by taking a subsequence if necessary, we may assume that $u_l\to u^*\in \B$ as $l\to \infty$. By letting $l\to \infty$, we get from (\ref{bnmh}) and the definition of subderivative that
\[
dh(w)(u^*)\leq \liminf_{l\to \infty}\frac{h(w+\alpha_l u_l)-h(w)}{\alpha_l}\leq \langle v, u^* \rangle- ||u^*||^2.
\]
As $v\in \widehat{\partial}h(w)$, we have $\langle v, u^* \rangle \leq dh(w)(u^*)$ and hence
$\langle v, u^* \rangle \leq \langle v, u^* \rangle- ||u^*||^2$.
This yields $u^*=0$.

For each $l$,   define $k(l)$ as a positive integer such that $k(l+1)>k(l)$ and $\|u_{k(l), l}-u_l\|\leq \frac{1}{l}$, which is possible because for each $l$, $u_{k,l}\to u_l$ as $k\to \infty$. Then we have $k(l)\to +\infty$  as $l\to \infty$. Due to $\|u_{k(l), l}\|\leq \|u_{k(l), l}-u_l\|+\|u_l\|\leq \frac{1}{l}+\|u_l\|$ and $u_l\to u^*=0$ as $l\to \infty$, we have $u_{k(l), l}\to 0$ as $l\to \infty$. By taking a subsequence if necessary, we assume that $u_{k(l), l}\in \inte \B$ for all $l$. For each $l$, according to the previous argument, $\bar{x}+\bar{t}_{k(l)}\bar{w}_{k(l)}+\bar{t}_{k(l)}\alpha_lu_{k(l), l}$ is a minimum of
$$
F_{k(l),l}(x):=f(x)-f(\bar{x})-\langle v, x-\bar{x}\rangle+ \frac{1}{\alpha_l \bar{t}_{k(l)}}||x-(\bar{x}+ \bar{t}_{k(l)} \bar{w}_{k(l)})||^2
$$
over the compact set $X:=\bar{x}+\bar{t}_{k(l)} \bar{w}_{k(l)}+ \bar{t}_{k(l)} \alpha_l\B$. As $u_{k(l), l}\in \inte \B$ for all $l$, we get from the optimality condition that  $0\in \widehat{\partial}F_{k(l),l}(\bar{x}+\bar{t}_{k(l)}\bar{w}_{k(l)}+\bar{t}_{k(l)}\alpha_lu_{k(l), l})$ or explicitly   $v-2u_{k(l), l}\in \widehat{\partial}f(\bar{x}+\bar{t}_{k(l)}\bar{w}_{k(l)}+\bar{t}_{k(l)}\alpha_lu_{k(l), l})$ for all $l$.
For each $l$, let
$$
t_l:=\bar{t}_{k(l)},\quad w_l:=\bar{w}_{k(l)}+\alpha_lu_{k(l), l}\quad\mbox{and}\quad v_l:=v-2u_{k(l), l}.
$$
 Then we have $t_l\to 0+$, $w_l\to w$ and $v_l\to v$ as $l\to \infty$, and moreover  $v_l\in \widehat{\partial}f(\bar{x}+t_lw_l)$ for all $l$.   Similarly as the derivation of (\ref{baoyu}), we get from the inequality $F_{k(l),l}(\bar{x}+t_lw_l)\leq F_{k(l),l}(\bar{x}+\bar{t}_{k(l)} \bar{w}_{k(l)})$ that
\begin{equation}\label{baoyu000}
\frac{f(\bar{x}+t_lw_l)-f(\bar{x})}{t_l}\leq \frac{f(\bar{x}+\bar{t}_{k(l)} \bar{w}_{k(l)})-f(\bar{x})}{\bar{t}_{k(l)}}+\langle v, \alpha_lu_{k(l), l}\rangle- \alpha_l||u_{k(l), l}||^2\quad \forall l.
\end{equation}
In view of (\ref{subderivative}) and the facts that $\alpha_l\to 0+$ and $u_{k(l), l}\to 0$ as $l\to \infty$, by letting $l\to \infty$ on both sides of (\ref{baoyu000}), we  get
$
\limsup_{l\to \infty}\frac{f(\bar{x}+t_lw_l)-f(\bar{x})}{t_l}\leq  h(w)$.
By the definition of subderivative, we have
$
h(w)\leq \liminf_{l\to \infty}\frac{f(\bar{x}+t_lw_l)-f(\bar{x})}{t_l}$
and hence the equality
$
h(w)= \lim_{l\to \infty}\frac{f(\bar{x}+t_lw_l)-f(\bar{x})}{t_l}$.
That is, the sequences $\{t_l\}$, $\{w_l\}$ and $\{v_l\}$ are the ones as required.

To show {\bf (b)},  by the positive homogeneity and the properness of $h$, we get by definition  the formula for $\partial^>h(0)$, which gives the first inclusion in (\ref{best_lower}).
Let $\bar{w}\in W $ and let $v\in \partial h(\bar{w})$.  By  definition   there are $w_k\to \bar{w}$ and $v_k\to v$ with $h(w_k)\to h(\bar{w})$ and $v_k\in \widehat{\partial} h(w_k)$. Clearly, we have $0<h(w_k)<+\infty$ for all $k$ sufficiently large. Then by {\bf (a)} and the definition of outer limiting subdifferential sets, we have $v_k\in \partial^>f(\bar{x})$ for all $k$ sufficiently large. As  $\partial^>f(\bar{x})$ is by definition closed, we  have $v\in  \partial^>f(\bar{x})$.
As $\bar{w}\in W $ and  $v\in \partial h(\bar{w})$ are given arbitrarily,  we get the second inclusion in  (\ref{best_lower}) immediately by taking the closedness of $\partial^>f(\bar{x})$ into account.   It remains to show that $\partial^>h(0)$ is nonempty whenever $W \not=\emptyset$.
For  $\bar{w}\in W$,   by the existence of subgradients \cite[Corollary 8.10]{roc} due to $h$ being lsc,  we assert that there exists a sequence $w^\nu\to  \bar{w}$ with $h(w^\nu)\to
  h(\bar{w})$ and  $\partial h(w^\nu)\not=\emptyset$ for all $\nu$.  Then for all $\nu$ large enough, we have $0<h(w^\nu) <+\infty$, i.e., $w^\nu\in W $. This entails the nonemptiness of  $\partial^>h(0)$.
  
To show {\bf (c)},  we consider the function $f_+(x):=\max\{f(x)-f(\bar{x}), 0\}$. Clearly, $f_+$ is locally lsc at $\bar{x}$ with $\partial^> f_+(\bar{x})=\partial^> f(\bar{x})$,  $df_+(\bar{x})=h_+$  and $\{w\mid 0<df_+(\bar{x})(w)<+\infty\}=W$. Moreover, $df_+(\bar{x})$ is a proper, lsc and positive homogeneous function with $\partial [df_+(\bar{x})](w)=\partial h(w)$ for all $w\in W$. So by applying {\bf (b)} to the function $f_+$,   the results in {\bf (b)} hold  with $\partial^>h_+(0)$ in place of  $\partial^>h(0)$.  The proof is completed. \end{proof}

\begin{remark}
    The idea of  using the auxiliary function $F_{k, l}(x)$ in our proof  for  perturbation analysis is borrowed from \cite[Lemma 1]{ers}. It is worth noting that the inclusions in (\ref{best_lower}) resemble the ones for subdifferential in \cite[Exercise 8.44]{roc}, that is, 
    $\partial h(\bar{w})\subset \partial h(0)\subset \partial f(\bar{x})$ with $\bar{w}\in \dom h$, where the requirement $h(\bar{w})>0$ in (\ref{best_lower}) shows a consistency with the requirement that the function values approach $f(\bar x)$ from above in the definition of the outer limiting subdifferential.
    \end{remark}

The following technical result  extends  the one  presented in \cite[Lemma 3.2]{chnt21} for a proper and positively homogeneous function of degree 2  to a positively homogeneous function of any degree $\gamma>0$, not necessarily being proper. Instead of using proximal subgradients for perturbation analysis  as  in the proof of \cite[Lemma 3.2]{chnt21}, we utilize regular subgradients in our proof.
\begin{lemma}\label{lem-qici}
Assume that $h:\R^n\to \overline{\R}$ is positively homogeneous of degree $\gamma>0$ in the sense that $h(\lambda w)=\lambda^\gamma h(w)$ for all $\lambda>0$ and $w\in \dom h$. Then for any $z\in \partial h(w)$ with $w\in \dom h$, we have $\langle z, w\rangle=\gamma h(w)$.
\end{lemma}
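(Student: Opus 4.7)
The plan is a standard Euler-type identity argument, carried out first for regular subgradients $z \in \widehat{\partial}h(w)$ and then lifted to the limiting case by a passage to the limit. Fix $w \in \dom h$ and $z \in \widehat{\partial}h(w)$ (in particular, $h(w)$ must be finite, otherwise the regular subdifferential is empty). First, dispose of the degenerate case $w = 0$: applying positive homogeneity with $\lambda = 2$ gives $h(0) = 2^\gamma h(0)$, so $h(0) = 0$ since $\gamma > 0$, and the identity $\langle z, 0\rangle = 0 = \gamma h(0)$ is automatic.

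For $w \neq 0$, I would probe the regular subgradient inequality along the radial direction $w' = (1+t)w$ with $|t|$ small. On one hand, positive homogeneity of degree $\gamma$ gives $h((1+t)w) = (1+t)^\gamma h(w)$; on the other hand, the definition of $z \in \widehat{\partial}h(w)$ yields
$$h((1+t)w) \ge h(w) + t\langle z, w\rangle + o(|t|) \qquad \text{as } t \to 0,$$
since $\|w' - w\| = |t|\,\|w\|$. Combining the two, using the expansion $(1+t)^\gamma - 1 = \gamma t + o(t)$, and then dividing by $t > 0$ and $t < 0$ separately (the latter flipping the inequality sign) before letting $t \to 0^\pm$ produces the two bounds $\gamma h(w) \ge \langle z, w\rangle$ and $\gamma h(w) \le \langle z, w\rangle$; equality is the claim.

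The limiting-subgradient case then follows cheaply: given $z \in \partial h(w)$, choose sequences $w_k \to w$ and $z_k \to z$ with $h(w_k) \to h(w)$ and $z_k \in \widehat{\partial}h(w_k)$. Applying the regular case to each pair $(w_k, z_k)$ gives $\langle z_k, w_k\rangle = \gamma h(w_k)$ for every $k$, and passing to the limit on both sides delivers $\langle z, w\rangle = \gamma h(w)$.

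I do not anticipate a serious obstacle. The only mildly delicate points are handling $w = 0$ separately (where everything collapses to $0 = 0$) and matching orders of smallness in the remainder after substituting $w' = (1+t)w$. The regular subgradient inequality, with its $o(\|w' - w\|)$ remainder, matches the first-order Taylor expansion of $(1+t)^\gamma$ very cleanly for any $\gamma > 0$, which is precisely why working with $\widehat{\partial}h$ streamlines the argument compared with the proximal-subgradient approach of \cite{chnt21} employed for the degree-$2$ case.
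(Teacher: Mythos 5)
Your proof is correct and follows essentially the same route as the paper's: both establish the Euler identity for regular subgradients by probing radially along $\lambda w$ with $\lambda\to 1^{\pm}$ (your $w'=(1+t)w$, $t\to 0^{\pm}$), handle $w=0$ separately via $h(0)=0$, and then pass to the limit along the defining sequences $w_k\to w$, $z_k\to z$ with $h(w_k)\to h(w)$. No gaps.
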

\begin{proof}  Let $z\in \partial h(w)$ with $w\in \dom h$. Then we have  $h(w)>-\infty$, for otherwise $\partial h(w)$ is undefined.
If $w=0$,  then by the positive  homogeneity of degree $\gamma$, we have  $h(w)=0$ and in this case $\langle z, w\rangle=\gamma h(w)$ holds trivially.
  Now assume that  $w\not=0$. By definition, there exist some $w_k\to w$ and $z_k\to z$  with $h(w_k)\to h(w)$  and  $z_k\in \widehat{\partial} h(w_k)$ for all $k$. By taking a subsequence if necessary, we assume that $w_k\not=0$ for all $k$. By the definition of  regular subgradients, we have
\begin{equation}\label{jdkankan}
\liminf_{u\to w_k,\;u\not=w_k}\frac{h(u)-h(w_k)-\langle z_k, u-w_k\rangle}{\|u-w_k\|}\geq 0 \quad \forall k.
\end{equation}
By letting $u=\lambda w_k$ with $\lambda\to 1+$ in  (\ref{jdkankan}), we get from the positive  homogeneity  of $h$ of degree $\gamma$  that
$
\gamma h(w_k)-\langle z_k, w_k\rangle \geq 0$ for all $k$.
Similarly, by letting $u=\lambda w_k$ with $\lambda\to 1-$ in  (\ref{jdkankan}), we  get
$
-\gamma h(w_k)+\langle z_k, w_k\rangle \geq 0$  for all $k$.
Therefore, we have
$
 \langle z_k, w_k\rangle = \gamma h(w_k)$ for all $k$, which implies   $\langle z, w\rangle=\gamma h(w)$ by letting $k\to \infty$ on both sides. This completes the proof. \end{proof}

By a direct application of Lemma \ref{lem-lower-esti} and  the  limit form of the  K{\L} property   refined in Theorem \ref{theo-relation}, our main results in this section are summarized in the following.
\begin{theorem}\label{theo-good-lower-bound}
Suppose that $f:\R^n\rightarrow \overline{\R}$ is finite and locally lsc at $\bar{x}$. Let $0\leq\theta<1$ and $g(x):=(\max\{f(x)-f(\bar{x}), 0\})^{1-\theta}$.  Define   $h:\R^n\to \overline{\R}$ by
\begin{equation}\label{theta-subderitive}
h(w):=\liminf_{\tau\to 0+,\,w'\to w}\frac{f(\bar{x}+\tau w')-f(\bar{x})}{(1-\theta) \tau^{\frac{1}{1-\theta}}},
\end{equation}
and let
\begin{equation}\label{xiaoyang}
W:=\{w\in \R^n\mid 0<h(w)<+\infty\}.
\end{equation}
The following hold:
\begin{description}
\item[(a)] The function $h$ is lsc   and positively homogeneous of degree $\frac{1}{1-\theta}$ with
$h(w)=\langle v, w\rangle$ for all $v\in \partial [(1-\theta)h](w)$. Moreover, we  have either $h(0)=0$ or $h(0)=-\infty$, and  $h$ is proper if and only if $h(0)=0$.
\item[(b)] The outer limiting subdifferential set  $\partial^>g(\bar{x})$ has
\begin{equation}\label{tongyong-lower}
\displaystyle \bigcup_{w\in W  } \frac{(1-\theta)\partial [ (1-\theta) h](w) }{[ (1-\theta) h(w)]^{\theta}}
\end{equation}
as a subset (possibly not closed), which   consists of nonzero vectors,   and  is nonempty if and only if  $W\not=\emptyset$. Moreover, we have  
$\partial^>[dg(\bar{x})](0)=\cl \displaystyle \bigcup_{w\in W  } \frac{(1-\theta)\partial [ (1-\theta) h](w) }{[ (1-\theta) h(w)]^{\theta}}$. 
\item[(c)] If  $W\not=\emptyset$,  then $f$ cannot satisfy the K{\L} property  at $\bar{x}$ with any exponent $\theta'\in [0, \theta)$.
   \item[(d)] Let $\mu>0$. If $f$ satisfies the K{\L} property  at $\bar{x}$ with exponent $\theta$ and   modulus $\mu$, then
\begin{equation}\label{haojinglian}
d\left(0, \partial[(1-\theta) h](w)\right)\geq \frac{\mu}{1-\theta}[(1-\theta)h(w)]^{\theta}\quad \forall w\in W.
\end{equation}
\item[(e)] If $h$ is proper, we have  K{\L}$(f,\bar{x}, \theta)\leq K{\L}((1-\theta)h, 0, \theta)$. Otherwise,  K{\L}$(f,\bar{x}, \theta)\leq K{\L}((1-\theta)h_+, 0, \theta)$.
\end{description}
\end{theorem}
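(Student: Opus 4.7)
My plan is to dispatch part (a) directly from definition (\ref{theta-subderitive}): the change of variables $\tau\mapsto \lambda\tau,\ w'\mapsto w'/\lambda$ yields positive homogeneity of degree $\frac{1}{1-\theta}$; the liminf is automatically lsc; and the identity $h(w)=\langle v,w\rangle$ on $\partial[(1-\theta)h](w)$ is Lemma \ref{lem-qici} with $\gamma=\frac{1}{1-\theta}$. Setting $w'=0$ in (\ref{theta-subderitive}) forces $h(0)\leq 0$, and positive homogeneity then rules out all values except $0$ and $-\infty$. When $h(0)=0$, properness follows: lower semicontinuity of $h$ at $0$ combined with the identity $h(\lambda w_0)=\lambda^{1/(1-\theta)}h(w_0)$ rules out $h(w_0)=-\infty$ for any $w_0$.

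The heart of the theorem is part (b), which I would prove by applying Lemma \ref{lem-lower-esti} to $g$ itself. Here $g$ is locally lsc at $\bar{x}$ with $g(\bar{x})=0$, so the lemma is available once $dg(\bar{x})$ is computed. Via the substitution $\sigma=\tau^{1/(1-\theta)}$ and the observation that $\phi(s):=\max\{s,0\}^{1-\theta}$ is continuous and non-decreasing on $\mathbb{R}$ (so that $\liminf\phi(\cdot)=\phi(\liminf\cdot)$), I would establish
\[
dg(\bar{x})(w)=[(1-\theta)h_+(w)]^{1-\theta}\quad\mbox{for all }w\in\mathbb{R}^n,
\]
so that $\{w:0<dg(\bar{x})(w)<+\infty\}=W$. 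On $W$, the chain rule \cite[Proposition 2.1]{yz} (already used in the proof of Theorem \ref{theo-relation}) then yields
\[
\partial[dg(\bar{x})](w)=\frac{1-\theta}{[(1-\theta)h(w)]^{\theta}}\,\partial[(1-\theta)h](w).
\]
Plugging this into Lemma \ref{lem-lower-esti}(b) applied to $g$ simultaneously gives the inclusion of (\ref{tongyong-lower}) in $\partial^>g(\bar{x})$ and the stated closure identity for $\partial^>[dg(\bar{x})](0)$. Each element of (\ref{tongyong-lower}) is non-zero by part (a), since $\langle v,w\rangle=h(w)>0$ forces $v\ne 0$ on $W$; nonemptiness when $W\ne\emptyset$ follows from \cite[Corollary 8.10]{roc} applied to the lsc function $(1-\theta)h$, as in Lemma \ref{lem-lower-esti}(b).

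Parts (c)--(e) then follow routinely from the framework of Theorem \ref{theo-relation}. For (c), if $W\ne\emptyset$ then (b) gives $\partial^>g(\bar{x})\ne\emptyset$, so Theorem \ref{theo-relation}(b) places $0\in\partial^>\tilde{g}(\bar{x})$ for every $\tilde{\theta}<\theta$, and Theorem \ref{theo-relation}(a) rules out the K{\L} property at $\tilde{\theta}$. For (d), Theorem \ref{theo-relation}(d) gives $K{\L}(f,\bar{x},\theta)=d(0,\partial^>g(\bar{x}))\geq\mu$, and the inclusion in (b) then forces $\|(1-\theta)v/[(1-\theta)h(w)]^{\theta}\|\geq\mu$ for every $v\in\partial[(1-\theta)h](w)$ and $w\in W$, which rearranges to (\ref{haojinglian}). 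For (e), I apply (\ref{kl-modulus-def}) to $(1-\theta)h$ at $0$ (respectively $(1-\theta)h_+$ when $h$ is improper) and take the liminf in (d) as $w\to 0$ with $h(w)\to 0^+$; in this regime $w$ eventually lies in $W$ (using local finiteness of $h$ near $0$ for proper $h$, or the local identity $h_+=h$ near $W$ for improper $h$), so (d) is applicable and yields the desired inequality.

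The main technical obstacle is in part (b), specifically the computation $dg(\bar{x})=[(1-\theta)h_+]^{1-\theta}$ and the subsequent chain rule on $W$. The first requires interchanging a joint liminf (in $\tau\downarrow 0,\ w'\to w$) with the non-smooth transformation $\phi(s)=\max\{s,0\}^{1-\theta}$, which I handle via $\phi$'s continuity and monotonicity; the second requires that $(1-\theta)h$ be locally finite and positive near each $w\in W$, which I secure via the lower semicontinuity from (a). Once these ingredients are in place, the remaining claims are essentially bookkeeping against Theorem \ref{theo-relation}.
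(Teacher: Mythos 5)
Your proposal is correct and follows essentially the same route as the paper: part (a) via the homogeneity of the difference quotients and Lemma \ref{lem-qici}, part (b) by computing $dg(\bar{x})=[(1-\theta)h]_+^{1-\theta}$ together with the chain rule of \cite[Proposition 2.1]{yz} and then applying Lemma \ref{lem-lower-esti} to $g$, and parts (c)--(e) by bookkeeping against Theorem \ref{theo-relation}. The only (immaterial) difference is that you derive (d) directly from the inclusion of (\ref{tongyong-lower}) in $\partial^>g(\bar{x})$ and then deduce (e), whereas the paper establishes the modulus inequality of (e) first and then globalizes it to (\ref{haojinglian}) using the positive homogeneity of $h$.
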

\begin{proof} For all $\tau>0$, define the $\frac{1}{1-\theta}$-th-order difference quotients  $\Delta_\tau f(\bar{x}):\R^n\to \overline{\R}$ by
$\Delta_\tau f(\bar{x})(w):=\frac{f(\bar{x}+\tau w)-f(\bar{x})}{(1-\theta) \tau^{\frac{1}{1-\theta}}}$. 
Then the function $h$ is given by the lower epi-limit of the family of  functions $\Delta_\tau f(\bar{x})$ as $\tau\to 0+$, and   thus lsc (cf. \cite[Proposition 7.4 (a)]{roc}). The positive homogeneity of degree $\frac{1}{1-\theta}$   arises from the form of the $\frac{1}{1-\theta}$-th-order difference quotients in having
$\Delta_\tau f(\bar{x})(\lambda w)=\lambda^{\frac{1}{1-\theta}}\Delta_{\tau\lambda} f(\bar{x})(w)$ for $\lambda>0$. By the lower semicontinuity and  positive homogeneity of degree $\frac{1}{1-\theta}$ of $h$, one can easily verify  that either $h(0)=0$ or $h(0)=-\infty$, and that  $h$ is proper if and only if $h(0)=0$. Moreover, by Lemma \ref{lem-qici}, we get $h(w)=\langle v, w\rangle$  for all $w\in \R^n$ and $v\in \partial [(1-\theta)h](w)$, implying that the subset defined  by (\ref{tongyong-lower}) does not contain  0.    Clearly, $g$ is by definition  locally lsc at $\bar{x}$ with $g(\bar{x})=0$.  Moreover,  we  have by definition
\begin{equation}\label{jiandanfaze}
dg(\bar{x}) =[(1-\theta)h ]_+^{1-\theta} \quad\mbox{and}\quad  \partial [dg(\bar{x})](w)=\frac{(1-\theta)\partial [ (1-\theta) h](w) }{[ (1-\theta) h(w)]^{\theta}}\quad \forall w\in W.
\end{equation}
Therefore, we have $dg(\bar{x})(0)=0$, and  $0<dg(\bar{x})(w)<+\infty$ if and only if $w\in W$.   Then by applying Lemma \ref{lem-lower-esti} {\bf (b)} to $g$, we get all the results in {\bf (a)} and {\bf (b)}.

 If $W\not=\emptyset$, we have $\partial^> g(\bar{x})\not=\emptyset$. It then follows from Theorem \ref{theo-relation} {\bf (a)} and {\bf (b)} that   $f$ cannot satisfy the K{\L} property  at $\bar{x}$ with any exponent $\theta'\in [0, \theta)$. This verifies  {\bf (c)}. According to {\bf (b)} and (\ref{jiandanfaze}), we have  $\partial^>g(\bar{x})\supset \partial^>[dg(\bar{x})](0)= \partial^>[(1-\theta)h]_+^{1-\theta}(0)$, which implies by Theorem \ref{theo-relation} {\bf (d)} that   K{\L}$(f,\bar{x}, \theta)\leq$ K{\L}$((1-\theta)h_+, 0, \theta)$. That is,  if $f$ satisfies the K{\L} property  at $\bar{x}$ with exponent $\theta$ and  modulus $\mu$, then there exist some $\epsilon>0$ and $\nu\in (0, +\infty)$ so that
$ d(0, \partial[(1-\theta)h]_+(w))\geq \frac{\mu}{1-\theta}((1-\theta)h(w))_+^\theta$
whenever $\|w\|\leq \epsilon$ and $h(w)<\nu$. This latter condition can be refined as (\ref{haojinglian}) by virtue of the lower semicontinuity  and  the positive homogeneity of degree $\frac{1}{1-\theta}$ of $h$.   If $h$ is proper, we have $h(0)=0$ and  by definition  K{\L}$((1-\theta)h_+, 0, \theta)=$  K{\L}$((1-\theta)h, 0, \theta)$. This completes the proof. \end{proof}

\begin{remark}
We refer the reader to \cite[Example 2.1]{lmy} for situations where the subset   (\ref{tongyong-lower}) of  $\partial^> g(\bar{x})$  is not closed in the case of $\theta=0$. The lower estimates given in  \cite[Theorem 1]{ers} for the outer limiting subdifferential of a pointwise min  of  finitely many functions having  sublinear Hadamard directional derivatives, and in \cite[Corollary 3]{ers}  and  \cite[Theorem 3.2]{ca2016} for  the outer subdifferential of  the pointwise max of  finitely many smooth functions,  can be easily recovered from  (\ref{tongyong-lower}) with $\theta=0$.

According Theorem \ref{theo-good-lower-bound} {\bf (c)}, once the K{\L} property of $f$ at $\bar{x}$ with some exponent $\theta\in [0, 1)$ is verified,  it suffices to show  $W\not=\emptyset$  in order to justify the sharpness of $\theta$.

Note that the function $h$ given by (\ref{theta-subderitive}) reduces to  the (first) subderivative $df(\bar{x})$  of $f$ at $\bar{x}$ in the case of $\theta=0$, and to  the second subderivative $d^2f(\bar{x}\mid 0)$  of $f$ at $\bar{x}$  for 0 in the case of $\theta=\frac12$. Therefore,  the function $h$ given by (\ref{theta-subderitive}) in the general case can be regarded as the  $\frac{1}{1-\theta}$-th subderivative of $f$ at $\bar{x}$. 
This  subderivative function, as demonstrated  by Theorem \ref{theo-good-lower-bound} {\bf (d)} and {\bf (e)},  can  preserve the K{\L} property from that of $f$ with the same exponent, but not conversely. Below is a nontrivial example.
\end{remark}

For an illustration of Theorem \ref{theo-good-lower-bound} with $\theta=\frac12$, we consider a function $f:\R^n\to \overline{\R}$ and a point $\bar{x}$ around which $f$ is of class $\mathcal{C}^2$ with $\nabla f(\bar{x})=0$.
In this case, $d^2f(\bar{x}\mid 0)=\langle \nabla^2f(\bar{x})w, w\rangle$ is a proper quadratic function with    $\partial [\frac12 d^2f(\bar{x}\mid 0)](w)=\{\nabla^2f(\bar{x}) w\}$.
By the spectral decomposition theorem and  (\ref{kl-modulus-def}),  we   get
\[
 \mbox{K}{\L} (\frac12 d^2f(\bar{x}\mid 0), 0, \frac12 )=\min\left\{\frac{\frac12\|\nabla^2f(\bar{x})w\|}{\sqrt{\frac12\langle w, \nabla^2f(\bar{x})w\rangle}}\mid \langle w, \nabla^2f(\bar{x})w\rangle>0\right\}=\sqrt{\frac{\lambda_{\min}}{2}}>0
\]
where  $\lambda_{\min}$ is the smallest positive eigenvalue of $\nabla^2f(\bar{x})$ if $\nabla^2f(\bar{x})$ is not negative-semidefinite, and otherwise it is
 set to be  $+\infty$.

However, the inequality K{\L}$(f,\bar{x}, \frac12)\leq$ K{\L}$(\frac12 d^2f(\bar{x}\mid 0), 0, \frac12)$ in Theorem \ref{theo-good-lower-bound} {\bf (e)} may hold as a strict one, as can be  seen from the instance that
$f_0(x):=-(x_1+x_2-6)^2-16\sqrt{|x_1|}-16\sqrt{|x_2|}$
with $\bar{x}:=(1,1)^T$. Clearly, $f_0$ is of class $\mathcal{C}^\infty$ around $\bar{x}$  with $\nabla f_0(\bar{x})=0$  and a singular Hessian matrix
$
\nabla^2 f_0(\bar{x})=\left[
\begin{array}{cc}
2 & -2 \\
-2 & 2
\end{array}
\right]$  having two eigenvalues: $4$ and 0.
By the third-order Taylor expansion, we have
$f_0(\bar{x}+\tau w)-f_0(\bar{x})=(w_1-w_2)^2\tau^2+\tau^3(-w_1^3-w_2^3) +o(\tau^3)$ for all $w\in \R^2$. Then the third subderivative  $h$ defined by (\ref{theta-subderitive}) with $\theta=\frac23$ can be calculated to be
\[
h(w)=\left\{
\begin{array}{ll}
-3(w_1^3+w_2^3) & \mbox{if}\;w_1=w_2,\\
+\infty &\mbox{otherwise},
\end{array}
\right.
\]
and accordingly $W$ defined by (\ref{xiaoyang}) is exactly the nonempty set $\{t(-1, -1)^T\mid t>0\}$. By Theorem  \ref{theo-good-lower-bound} {\bf (c)},  $f_0$ cannot satisfy the K{\L} property  at $\bar{x}$ with any exponent $\theta'\in [0, \frac23)$. Therefore,
we  have 
\[
\mbox{K}{\L}(f_0, \bar{x}, \frac12)=0 \,<\,\mbox{K}{\L}(\frac12 d^2f_0(\bar{x}\mid 0), 0, \frac12)=\sqrt{\frac{\lambda_{\min}}{2}}=\sqrt{2}.
\]

This unsatisfactory situation can be avoided if the nonsingularity of  $\nabla^2f(\bar{x})$ is presented,
as can be seen from Proposition \ref{prop-nonsingular} below, where we  show  by assuming the nonsingularity of  $\nabla^2f(\bar{x})$   that,
 the inequality $\mbox{K}{\L}(f,\bar{x}, \frac12)\leq \mbox{K}{\L}(\frac12 d^2f(\bar{x}\mid 0), 0, \frac12)$ holds as an equality  and the subset (\ref{tongyong-lower}) is closed and coincides with the outer limiting subdifferential set  $\partial^> g(\bar{x})$ with $\theta=\frac12$. 
 These  results are obtained as a direct consequence of some more general results presented for prox-regular and twice epi-differentiable functions in the next section.

\section{The K{\L} property  for a prox-regular and twice epi-differentiable function and its Moreau envelopes}\label{sec-prox-twice-diff}
In this section, we study the K{\L} property with exponent $\frac12$  for  a prox-regular and twice epi-differentiable function by virtue of its Moreau envelopes.  

To begin with,  we  make it clear that the function $g$ defined previously for general $\theta\in [0, 1)$ is now specified  with $\theta=\frac12$ in this section:
 \begin{equation}\label{def-gg}
      g(x):=\sqrt{\max\{f(x)-f(\bar{x}), 0\}},
 \end{equation} 
 and  recall the notions of prox-regularity and subdifferential continuity as follows.
 \begin{definition}\cite[Definition 13.27]{roc}\label{def-prox-reg} A function $f:\R^n\to \overline{\R}$ is prox-regular at $\bar{x}$ for $\bar{v}$ if $f$ is finite and locally lsc at $\bar{x}$ with $\bar{v}\in \partial f(\bar{x})$, and there exist $\epsilon>0$ and $\rho\geq 0$ such that
$$
f(x')\geq f(x)+\langle v, x'-x\rangle-\frac{\rho}{2}\|x'-x\|^2\quad \forall x'\in \B_\epsilon(\bar{x})
$$
 when $v\in \partial f(x)$, $\|v-\bar{v}\|<\epsilon$, $\|x-\bar{x}\|<\epsilon$ and $f(x)<f(\bar{x})+\epsilon$. When this holds for all $\bar{v}\in \partial f(\bar{x})$, $f$ is said to be prox-regular at $\bar{x}$.
\end{definition}
\begin{definition}\cite[Definition 13.28]{roc}\label{def-sc}
A function $f:\R^n\to \overline{\R}$ is called subdifferentially continuous at $\bar{x}$ for $\bar{v}$ if $\bar{v}\in \partial f(\bar{x})$ and, whenever $(x^\nu, v^\nu)\to (\bar{x}, \bar{v})$ with $v^\nu\in \partial f(x^\nu)$, one has $f(x^\nu)\to f(\bar{x})$. If this holds for all $\bar{v}\in \partial f(\bar{x})$, $f$ is said to be subdifferentially continuous at $\bar{x}$.
\end{definition}
As pointed out in the Introduction, the class of prox-regular functions covers a large core of functions of interest in variational analysis and optimization  \cite{pr96, roc},  and the  subdifferential continuity  of a function is often assumed for technical reasons.

 For a function $f:\R^n\to \overline{\R}$  that is  prox-regular  at $\bar{x}$ for $0$, we will show that,  every outer
 limiting subgradient $u\in \partial^>g(\bar{x})$   corresponds to a nonnegative scalar $\tau$ and a vector $w\in \Sph$ such that
\begin{equation}\label{tfdf}
\sqrt{2\tau} u\in D(\partial f)(\bar{x}\mid 0)(w),
\end{equation}
where $\tau$,  $w$, and $\sqrt{2\tau} u$ can be generated from nearby points $(x, v)$ of $(\bar{x}, 0)$ in $\gph \partial f$ with $f(x)$ close enough to $f(\bar{x})$ from above.

\begin{lemma}\label{lem-prox-regular} Assume that $f:\R^n\to \overline{\R}$ is prox-regular  at $\bar{x}$ for $0$. Let $g$ be given by (\ref{def-gg}).  For every  $u\in \partial^>g(\bar{x})$,    there exist some $\tau\geq 0$,  $w\in \Sph$,   $x^k\to_f \bar{x}$ with $f(x^k)>f(\bar{x})$ and $v^k\in \partial f(x^k)$  such that
$$
\frac{f(x^k)-f(\bar{x})}{\frac{1}{2}\|x^k-\bar{x}\|^2}\to \tau,\quad\quad  \frac{x^k-\bar{x}}{\|x^k-\bar{x}\|}\to w  \quad\mbox{and}\quad  \frac{v^k}{\|x^k-\bar{x}\|}\to  \sqrt{2\tau} u\in D(\partial f)(\bar{x}\mid 0)(w).
$$
\end{lemma}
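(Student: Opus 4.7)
The plan is to unpack the membership $u\in\partial^>g(\bar{x})$ through the limit representation in Theorem \ref{theo-relation} and then exploit the prox-regular inequality to produce all the convergences in one swoop. Specializing (\ref{wcwf}) to $\theta=\tfrac12$ yields sequences $x^k\to_f\bar{x}$ with $f(x^k)>f(\bar{x})$ and $v^k\in\partial f(x^k)$ satisfying
\[
\frac{v^k}{2\sqrt{f(x^k)-f(\bar{x})}}\longrightarrow u.
\]
Since $f(x^k)\to f(\bar{x})$, this also forces $v^k\to 0$. Because $f(x^k)>f(\bar{x})$, we have $x^k\neq\bar{x}$, so it is legitimate to introduce the scalars $r_k:=\|x^k-\bar{x}\|>0$ and $a_k:=\sqrt{f(x^k)-f(\bar{x})}/r_k\geq 0$. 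The target data $(\tau,w)$ will be obtained as subsequential limits of $(2a_k^2,(x^k-\bar{x})/r_k)$, so the crux is to show that $\{a_k\}$ is bounded.

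This boundedness, which I expect to be the main obstacle, is where prox-regularity enters decisively. Once $k$ is large, the quadruple $(x^k,v^k,f(x^k),0)$ satisfies all four smallness requirements in Definition \ref{def-prox-reg}, so inserting $x'=\bar{x}$ into the prox-regular inequality yields
\[
f(\bar{x})\geq f(x^k)+\langle v^k,\bar{x}-x^k\rangle-\frac{\rho}{2}r_k^2,
\]
which rearranges as $a_k^2 r_k^2=f(x^k)-f(\bar{x})\leq \|v^k\|\,r_k+\tfrac{\rho}{2}r_k^2$. Dividing by $r_k^2$ and using $\|v^k\|/r_k=2\|u\|a_k(1+o(1))$ leads to the quadratic inequality
\[
a_k^2\leq 2\|u\|a_k+\tfrac{\rho}{2}+o(1),
\]
from which $\{a_k\}$ is bounded above. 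It is the strict one-sided hypothesis $f(x^k)\to f(\bar{x})+$ coupled with the prox-regular estimate that rules out the bad case $a_k\to\infty$; if $f(x^k)$ were allowed to equal $f(\bar{x})$, the argument would collapse.

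With $\{a_k\}\subset[0,M]$ and $\{(x^k-\bar{x})/r_k\}\subset\Sph$ compact, pass to a common subsequence so that $a_k\to a\geq 0$ and $(x^k-\bar{x})/r_k\to w\in\Sph$. Set $\tau:=2a^2\geq 0$, so $\sqrt{2\tau}=2a$. Then
\[
\frac{f(x^k)-f(\bar{x})}{\tfrac12 r_k^2}=2a_k^2\longrightarrow\tau,\qquad \frac{v^k}{r_k}=\frac{v^k}{2\sqrt{f(x^k)-f(\bar{x})}}\cdot 2a_k\longrightarrow 2au=\sqrt{2\tau}\,u,
\]
which delivers the first three required limits. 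Finally, since $(x^k,v^k)\in\gph\partial f$, $(x^k,v^k)\to(\bar{x},0)$, $r_k\downarrow 0$ and
\[
\frac{(x^k,v^k)-(\bar{x},0)}{r_k}\longrightarrow(w,\sqrt{2\tau}\,u),
\]
the definition of the tangent cone gives $(w,\sqrt{2\tau}\,u)\in T_{\gph\partial f}(\bar{x},0)$, i.e.\ $\sqrt{2\tau}\,u\in D(\partial f)(\bar{x}\mid 0)(w)$, closing the proof.
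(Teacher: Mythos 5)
Your proof is correct and follows essentially the same route as the paper's: both unpack $u\in\partial^>g(\bar{x})$ via the quotient formula (\ref{wcwf}), plug $x'=\bar{x}$ into the prox-regular inequality to bound the quotient $(f(x^k)-f(\bar{x}))/\|x^k-\bar{x}\|^2$, and then extract convergent subsequences to land in $T_{\gph \partial f}(\bar{x},0)$. The only cosmetic blemish is that the error term in your quadratic inequality is really $o(a_k)$ rather than $o(1)$ (since $\|v^k\|/r_k=2a_k(\|u\|+o(1))$), but the resulting bound $a_k^2\leq 2(\|u\|+1)a_k+\rho/2$ for large $k$ still yields boundedness of $\{a_k\}$, exactly as the paper's equivalent estimate does.
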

\begin{proof} Let $u\in \partial^>g(\bar{x})$.    By definition there are $x^k\to_f \bar{x}$ with $f(x^k)>f(\bar{x})$ and $v^k\in \partial f(x^k)$ such that
\begin{equation}\label{bsp}
v^k/\sqrt{4(f(x^k)-f(\bar{x}))}=\frac{v^k}{\|x^k-\bar{x}\|}/\sqrt{2\frac{f(x^k)-f(\bar{x})}{\frac12\|x^k-\bar{x}\|^2}} \to u.
\end{equation}
 As $f(x^k)\to f(\bar{x})$, we have $v^k\to 0$. In view of the prox-regularity of $f$ at $\bar{x}$ for $0$,  we can find some $\rho\geq 0$  such that $f(\bar{x})\geq f(x^k)+\langle v^k, \bar{x}-x^k\rangle-\frac{\rho}{2}\|x^k-\bar{x}\|^2$ for all $k$ large enough, implying that
\begin{equation}\label{gj-suofang}
 \frac{\|v^k\|}{\|x^k-\bar{x}\|}+\frac{\rho}{2}\geq \langle \frac{v^k}{\|x^k-\bar{x}\|}, \frac{x^k-\bar{x}}{\|x^k-\bar{x}\|}\rangle +\frac{\rho}{2}\geq \frac{f(x^k)-f(\bar{x})}{\|x^k-\bar{x}\|^2}>0\quad \forall k.
\end{equation}
Combing (\ref{bsp}) and (\ref{gj-suofang}), we have $
\frac{\|v^k\|}{\|x^k-\bar{x}\|}/\sqrt{ \frac{4\|v^k\|}{\|x^k-\bar{x}\|}+2\rho}\leq \|u\|+1
$ for all $k$ sufficiently large,
 which  implies the boundedness of the sequence $\{\frac{v^k}{\|x^k-\bar{x}\|}\}$ and hence that of the sequence  $\{ \frac{f(x^k)-f(\bar{x})}{\|x^k-\bar{x}\|^2}\}$. By taking some subsequences if necessary, we assume that there exist some  $w\in \Sph$, $v^*\in \R^n$   and   $\tau\geq 0$ such that   $\frac{x^k-\bar{x}}{\|x^k-\bar{x}\|}\to w$,   $\frac{v^k}{\|x^k-\bar{x}\|}\to v^*$ and $\frac{f(x^k)-f(\bar{x})}{\frac{1}{2}\|x^k-\bar{x}\|^2}\to \tau$.
      As $v^k\in \partial f(x^k)$ for all $k$ with  $x^k\equiv\bar{x}+\|x^k-\bar{x}\|\,\frac{x^k-\bar{x}}{\|x^k-\bar{x}\|}$ and $v^k\equiv 0+\|x^k-\bar{x}\|\,\frac{v^k}{\|x^k-\bar{x}\|}$,  we  have by definition  $v^*\in D(\partial f)(\bar{x}\mid 0)(w)$. It then follows from (\ref{bsp})
that  $v^*=\sqrt{2\tau}u$.  This completes the proof. \end{proof}

Whenever  $f$ is not only prox-regular and subdifferentially continuous, but also twice epi-differentiable (cf. Definition   \ref{def-subd}) at $\bar{x}$ for $0$ ,  we summarize in Lemma \ref{lem-prox-twice-epidiff} below  a few nice properties on  $f$  and the Moreau envelopes $e_\lambda f$ and $e_\lambda[\frac{1}{2}d^2f(\bar{x}\mid 0)]$, most of which can be found in  \cite{pr96} and  \cite[Chapter 13]{roc}.

 For a proper, lsc function $f:\R^n\to \overline{\R}$ and parameter value $\lambda>0$, the Moreau envelope function $e_\lambda f$ and proximal mapping $P_\lambda f$ are defined by
\[
e_\lambda f(x):=\inf_w\{f(w)+\frac{1}{2\lambda}\|w-x\|^2\}\quad \mbox{and}\quad P_\lambda f(x):=\arg\min_w\{f(w)+\frac{1}{2\lambda}\|w-x\|^2\}.
\]
Such a function is prox-bounded if there exists $\lambda>0$ such that $e_\lambda f(x)>-\infty$ for some $x\in \R^n$. In particular, if $f$ is bounded from below, then $f$ is prox-bounded.

\begin{lemma}\label{lem-prox-twice-epidiff}
Assume that  $f:\R^n\to \overline{\R}$ is prox-regular, subdifferentially continuous  and twice epi-differentiable at $\bar{x}$ for $0$. The following properties  hold:
\begin{description}
\item[(a)]  The function $d^2f(\bar{x}\mid 0)$ is proper, lsc,  prox-bounded and positively homogeneous of degree 2. Moreover, it is  prox-regular and subdifferentially continuous everywhere  with
\begin{equation}\label{jbcz}
\partial [\frac{1}{2} d^2f(\bar{x}\mid 0)]=D(\partial f)(\bar{x}\mid 0)
\end{equation}
and
\begin{equation}\label{wjfdyjbdl}
d^2f(\bar{x}\mid 0)(w)=\left\{
\begin{array}{ll}
\langle v, w\rangle &  \forall v\in D(\partial f)(\bar{x}\mid 0)(w),\\
+\infty &\forall w\not\in \dom D(\partial f)(\bar{x}\mid 0).
\end{array}
\right.
\end{equation}
\item[(b)]  For all $\lambda>0$ sufficiently small  there is a neighborhood of $\bar{x}$ on which $P_\lambda f$ is   single-valued and Lipschitz continuous with $P_\lambda f(\bar{x})=\bar{x}$ and $P_\lambda f=(I+\lambda \partial f)^{-1}$, and   $e_\lambda f$ is of class $\mathcal{C}^{1+}$ with $\nabla e_\lambda f(\bar{x})=0$ and $$\nabla e_\lambda f=\lambda^{-1}[I-P_\lambda f]=(\lambda I+(\partial f)^{-1})^{-1}.$$
  Moreover,  for all $\lambda>0$ sufficiently small, the Moreau envelope function $e_\lambda f$ is prox-regular,  subdifferentially continuous,  twice epi-differentiable (and twice semidifferentiable, cf. \cite[Definition 13.6]{roc}) at $\bar{x}$  and the  Moreau envelope function   $e_\lambda[\frac12 d^2f(\bar{x}\mid 0)]$ is of class $\mathcal{C}^{1+}$ and  positively homogeneous of degree 2,  with
\begin{equation}\label{fjjy000}
e_\lambda [\frac12 d^2f(\bar{x}\mid 0)]=  d^2[\frac{1}{2}e_\lambda f(\bar{x})],
\end{equation}
\begin{equation}\label{jbcz000}
\nabla e_\lambda [\frac12 d^2f(\bar{x}\mid 0)]=D(\nabla e_\lambda f)(\bar{x})=\left[\,\lambda I+ [D(\partial f)(\bar{x}\mid 0)]^{-1}\,\right]^{-1},
\end{equation}
and
\begin{equation}\label{jbcz111}
e_\lambda (  d^2f(\bar{x}\mid 0))(w)=\langle \nabla e_\lambda (\frac12 d^2f(\bar{x}\mid 0))(w), w\rangle\quad \forall w\in \R^n.
\end{equation}
\end{description}
\end{lemma}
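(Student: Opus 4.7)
The plan is to recognize that most assertions in both parts are catalogued results from the theory of prox-regular, twice epi-differentiable functions developed by Poliquin--Rockafellar \cite{pr96} and summarized in \cite[Chapter 13]{roc}; the remaining formulas can then be assembled using Lemma \ref{lem-qici}.

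For part \textbf{(a)}, I would proceed as follows. Properness, lower semicontinuity, and positive homogeneity of degree $2$ of $d^2f(\bar x\mid 0)$ follow directly from the definition of the second subderivative together with the twice epi-differentiability of $f$ at $\bar x$ for $0$. Prox-boundedness is an easy consequence of the quadratic lower estimate implicit in the prox-regularity of $f$ at $\bar x$ for $0$. The prox-regularity and subdifferential continuity of $d^2f(\bar x\mid 0)$ everywhere, together with the decisive identity $\partial[\frac12 d^2f(\bar x\mid 0)]=D(\partial f)(\bar x\mid 0)$, are exactly the content of Rockafellar's graphical-derivative representation theorem, which I would cite from \cite[Theorem 13.40]{roc} and \cite{pr96}. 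The formula (\ref{wjfdyjbdl}) is then immediate: Lemma \ref{lem-qici} applied with $\gamma=2$ to the degree-$2$ positively homogeneous function $\frac12 d^2f(\bar x\mid 0)$ yields $\langle v,w\rangle = d^2f(\bar x\mid 0)(w)$ for every $v\in\partial[\frac12 d^2f(\bar x\mid 0)](w) = D(\partial f)(\bar x\mid 0)(w)$, and the case $w\notin\dom D(\partial f)(\bar x\mid 0)$ forces $d^2f(\bar x\mid 0)(w)=+\infty$ because $\dom\partial[\frac12 d^2f(\bar x\mid 0)]\subset\dom d^2f(\bar x\mid 0)$.

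For part \textbf{(b)}, I would invoke the Moreau-envelope regularization theory for prox-regular functions: for all sufficiently small $\lambda>0$, $P_\lambda f$ is locally single-valued and Lipschitz near $\bar x$ with $P_\lambda f = (I+\lambda\partial f)^{-1}$, the envelope $e_\lambda f$ is of class $\mathcal{C}^{1+}$ with $\nabla e_\lambda f = \lambda^{-1}(I-P_\lambda f) = (\lambda I + (\partial f)^{-1})^{-1}$, and $P_\lambda f(\bar x)=\bar x$ because $0\in\partial f(\bar x)$ (all contained in \cite[Theorem 4.4]{pr96} and \cite[Chapter 13]{roc}). The prox-regularity, subdifferential continuity, and twice epi- and semidifferentiability of $e_\lambda f$ at $\bar x$ likewise follow from that theory. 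For the quantitative formulas I would apply part \textbf{(a)} to $e_\lambda f$: (\ref{fjjy000}) captures the commutation of the second subderivative with the Moreau envelope under twice epi-differentiability, a standard consequence of epi-convergence; and (\ref{jbcz000}) is obtained by combining (\ref{jbcz}) applied to $e_\lambda f$ with the resolvent identity, noting that the proximal mapping of the degree-$2$ homogeneous function $\frac12 d^2f(\bar x\mid 0)$ inverts $\lambda I + [D(\partial f)(\bar x\mid 0)]^{-1}$. Finally, (\ref{jbcz111}) is one more invocation of Lemma \ref{lem-qici} with $\gamma=2$, now applied to the $\mathcal{C}^{1+}$ degree-$2$ positively homogeneous function $e_\lambda[\frac12 d^2f(\bar x\mid 0)]$.

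The main obstacle is really the identification (\ref{jbcz}) and its dual counterpart (\ref{jbcz000}): these tie the subdifferential of the second subderivative to the graphical derivative of the subgradient mapping, and are the pivotal facts of this whole framework. Fortunately, the standing hypotheses of prox-regularity, subdifferential continuity, and twice epi-differentiability were chosen precisely so that this identification is furnished directly by \cite{pr96} and \cite[Chapter 13]{roc}, so in practice the heavy lifting reduces to a citation; Lemma \ref{lem-qici} then closes the diagram by producing (\ref{wjfdyjbdl}) and (\ref{jbcz111}).
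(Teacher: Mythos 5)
Your proposal follows essentially the same route as the paper: both proofs are an assembly of citations from \cite[Chapter 13]{roc} and \cite{pr96} (the paper uses \cite[Proposition 13.5, Theorem 13.40, Propositions 13.37 and 13.49, Exercise 13.45]{roc}, plus \cite[Theorem 4.1]{mms22} for properness and prox-boundedness of $d^2f(\bar{x}\mid 0)$, which you instead derive from the proximal-subgradient inequality --- the same underlying fact), with Lemma \ref{lem-qici} supplying (\ref{wjfdyjbdl}) and (\ref{jbcz111}) exactly as you describe. Two small points deserve attention. First, the Moreau-envelope results you invoke for part \textbf{(b)} (local single-valuedness of $P_\lambda f$, $\mathcal{C}^{1+}$ smoothness of $e_\lambda f$, etc., i.e.\ \cite[Proposition 13.37]{roc}) carry a standing prox-boundedness hypothesis on $f$ that is not assumed in the lemma; the paper disposes of it by noting that all assertions are local and adding to $f$ the indicator of a neighborhood of $\bar{x}$, which makes $f$ bounded below and hence prox-bounded. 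Your write-up skips this step, so as stated you are citing a theorem whose hypotheses you have not verified. Second, your justification of the $+\infty$ branch of (\ref{wjfdyjbdl}) rests on the inclusion $\dom\partial[\frac12 d^2f(\bar{x}\mid 0)]\subset\dom d^2f(\bar{x}\mid 0)$, but that inclusion is trivially true and points the wrong way: what is needed is $\dom d^2f(\bar{x}\mid 0)\subset\dom D(\partial f)(\bar{x}\mid 0)$, so that $w\notin\dom D(\partial f)(\bar{x}\mid 0)$ forces $d^2f(\bar{x}\mid 0)(w)=+\infty$. That reverse inclusion is part of what the combination of (\ref{jbcz}) with the prox-regularity and subdifferential continuity of $d^2f(\bar{x}\mid 0)$ delivers, and it should be stated in the correct direction. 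Neither issue changes the architecture of the argument, which matches the paper's.
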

\begin{proof}  By  \cite[Proposition 13.5]{roc}, $d^2f(\bar{x}\mid 0)$ is lsc and  positively homogeneous of degree 2.   In view of the fact that 0 is a proximal subgradient of $f$ at $\bar{x}$, we get from \cite[Theorem 4.1]{mms22} that    $d^2f(\bar{x}\mid 0)$ is proper  and prox-bounded.  In view of the fact that $f$ is assumed to be  prox-regular, subdifferentially continuous  and twice epi-differentiable at $\bar{x}$ for $0$, we get the equality (\ref{jbcz}) directly  from \cite[Theorem 13.40]{roc}.  The subdifferential  continuity of $d^2f(\bar{x}\mid 0)$ and the equality (\ref{wjfdyjbdl}) follow directly from Lemma \ref{lem-qici} and the equality (\ref{jbcz}). The prox-regularity of  $d^2f(\bar{x}\mid 0)$ follows from \cite[Proposition 13.49]{roc}. That is, all the results in
{\bf (a)} follow.  Observing that all the properties involved depend on the nature of $f$ in some neighborhood of $\bar{x}$, there's no harm, therefore, by adding to $f$ the indicator of some neighborhood of $\bar{x}$ so as to make $f$ being bounded from below and hence prox-bounded. Thus, without  assuming explicitly that $f$
 is prox-bounded, we can get all the results in {\bf (b)} from \cite[Proposition 13.37 and Exercise 13.45]{roc} in a straightforward way. This completes the proof. \end{proof}

For a vector $w\in \R^n$ and  a function $f:\R^n\to \overline{\R}$ which is  prox-regular, subdifferentially continuous  and twice epi-differentiable at some $\bar{x}$ for $0$,  the equality
\begin{equation}\label{xianzhi-semi}
d^2f(\bar{x}\mid 0)(w)=\lim_{k\to \infty}\frac{f(\bar{x}+t_kw_k)-f(\bar{x})}{\frac12 t_k^2}
\end{equation}
is not always guaranteed for any sequences $t_k\to 0+$ and $w_k\to w$ (cf. \cite[Example 13.10]{roc}), unless  there is  a convergent sequence $v_k$ such that $t_kv_k\in \partial f(\bar{x}+t_kw_k)$ for all $k$ sufficiently large, as can be seen from below.
\begin{lemma}\label{lem-rest-semi}
Assume that  $f:\R^n\to \overline{\R}$ is prox-regular, subdifferentially continuous  and twice epi-differentiable at $\bar{x}$ for $0$.
For any sequences $t_k\to 0+$ and  $w_k\to w$, (\ref{xianzhi-semi}) holds if there is a convergent  sequence $v_k$ such that $t_kv_k\in \partial f(\bar{x}+t_kw_k)$ for all $k$ sufficiently large.
\end{lemma}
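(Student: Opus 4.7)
The plan is to sandwich the second-order difference quotients $q_k := \frac{f(\bar{x}+t_k w_k) - f(\bar{x})}{\tfrac12 t_k^2}$ between two copies of $d^2f(\bar{x}\mid 0)(w)$, since the lower bound $d^2f(\bar{x}\mid 0)(w) \leq \liminf_k q_k$ is immediate from Definition \ref{def-subd}(b) while all three standing hypotheses must cooperate to deliver the matching upper bound $\limsup_k q_k \leq d^2f(\bar{x}\mid 0)(w)$.

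First I would extract structural information from the nearby-subgradient sequence. Setting $\bar{v} := \lim_k v_k$, the pairs $(\bar{x}+t_k w_k,\, t_k v_k) \in \gph \partial f$ converge to $(\bar{x}, 0)$, so subdifferential continuity (Definition \ref{def-sc}) forces $f(\bar{x}+t_k w_k) \to f(\bar{x})$, and rewriting these pairs as $(\bar{x}, 0) + t_k(w_k, v_k)$ shows by the very definition of graphical derivative that $\bar{v} \in D(\partial f)(\bar{x}\mid 0)(w)$. The identity (\ref{wjfdyjbdl}) of Lemma \ref{lem-prox-twice-epidiff}(a) then fixes the finite value $d^2f(\bar{x}\mid 0)(w) = \langle \bar{v}, w\rangle$.

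For the upper bound I would invoke twice epi-differentiability to produce, for the prescribed $t_k\downarrow 0$, a recovery sequence $\tilde{w}_k \to w$ with $\frac{f(\bar{x}+t_k \tilde{w}_k)-f(\bar{x})}{\tfrac12 t_k^2} \to d^2f(\bar{x}\mid 0)(w)$, and then feed $x = \bar{x}+t_k w_k$, $v = t_k v_k$, $x' = \bar{x}+t_k \tilde{w}_k$ into the prox-regularity inequality of Definition \ref{def-prox-reg}. Rearranging and dividing by $\tfrac12 t_k^2$ yields
\[
\frac{f(\bar{x}+t_k \tilde{w}_k) - f(\bar{x})}{\tfrac12 t_k^2} \geq q_k + 2\langle v_k,\, \tilde{w}_k - w_k\rangle - \rho\|\tilde{w}_k - w_k\|^2,
\]
and, since $v_k\to \bar{v}$ stays bounded and $\tilde{w}_k - w_k \to 0$, taking $\limsup_k$ on both sides gives $d^2f(\bar{x}\mid 0)(w) \geq \limsup_k q_k$, closing the sandwich.

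The only subtle point is verifying that the hypotheses of prox-regularity are actually met at $x = \bar{x}+t_k w_k$ with $v = t_k v_k$: the bounds on $\|v\|$, $\|x-\bar{x}\|$ and $\|x'-\bar{x}\|$ follow routinely from $t_k \to 0$ together with the boundedness of $\{v_k\}$, $\{w_k\}$, $\{\tilde{w}_k\}$, but the requirement $f(x) < f(\bar{x}) + \epsilon$ is not automatic and is exactly what subdifferential continuity supplies via $f(\bar{x}+t_k w_k) \to f(\bar{x})$. So prox-regularity, subdifferential continuity, and twice epi-differentiability each enter in an essential way, and I expect this admissibility check to be the only point needing care.
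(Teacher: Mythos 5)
Your proof is correct, but it follows a genuinely different route from the paper's. The paper proves this lemma by passing to the Moreau envelope: it sets $y_k:=\bar{x}+t_kw_k+\lambda t_kv_k$, uses $P_\lambda f=(I+\lambda\partial f)^{-1}$ to identify $\bar{x}+t_kw_k=P_\lambda f(y_k)$ and $e_\lambda f(y_k)=f(\bar{x}+t_kw_k)+\frac{\lambda}{2}t_k^2\|v_k\|^2$, evaluates the limit of the difference quotients via the twice \emph{semi}differentiability of $e_\lambda f$ and the identity $d^2[\frac12 e_\lambda f(\bar{x})]=e_\lambda[\frac12 d^2f(\bar{x}\mid 0)]$, and finally lets $\lambda\to 0+$ using \cite[Theorem 1.25]{roc}. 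You instead run a direct sandwich: the lower bound $\liminf_k q_k\geq d^2f(\bar{x}\mid 0)(w)$ is definitional, and the upper bound comes from feeding a recovery sequence $\tilde{w}_k$ (supplied by twice epi-differentiability for the given $t_k\downarrow 0$) into the prox-regularity inequality at $x=\bar{x}+t_kw_k$, $v=t_kv_k$, with the error terms $2\langle v_k,\tilde{w}_k-w_k\rangle-\rho\|\tilde{w}_k-w_k\|^2$ vanishing. Your admissibility check is exactly right: subdifferential continuity is what certifies $f(\bar{x}+t_kw_k)<f(\bar{x})+\epsilon$, so all three hypotheses enter. Your argument is more elementary and self-contained in that it avoids Lemma \ref{lem-prox-twice-epidiff}\textbf{(b)} entirely (you only touch part \textbf{(a)}, and even that only to record finiteness of the limit, which is not strictly needed since the case $d^2f(\bar{x}\mid 0)(w)=+\infty$ is already settled by the liminf bound alone); what the paper's envelope route buys is that the same machinery is reused immediately afterwards for parts \textbf{(b)} and \textbf{(c)} of Theorem \ref{theo-prox-regular-twice-epidiff}, so no effort is wasted there.
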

\begin{proof} Let  $t_k\to 0+$ and $w_k\to w$.  Assume that there are some $v\in \R^n$ and
 $v_k\to v$ with $t_kv_k\in \partial f(\bar{x}+t_kw_k)$ for all $k$ sufficiently large.  Let $\lambda>0$ be sufficiently small such that all the properties
 in Lemma \ref{lem-prox-twice-epidiff} {\bf (b)} hold, and let $y_k:=\bar{x}+t_kw_k+\lambda t_kv_k$. Then we have $\bar{x}+t_kw_k\in (I+\lambda \partial f)^{-1}(y_k)$ for all $k$ sufficiently large.
By Lemma \ref{lem-prox-twice-epidiff} {\bf (b)}, we have $\bar{x}+t_kw_k=P_\lambda f(y_k)$ and $e_\lambda f(y_k)=f(\bar{x}+t_kw_k)+\frac{\lambda}{2}t_k^2\|v_k\|^2$ for all $k$ sufficiently large.
Therefore,   we have
\begin{equation}\label{lichangxi}
\begin{array}{ll}
\displaystyle\lim_{k\to \infty}\frac{f(\bar{x}+t_kw_k)-f(\bar{x})}{\frac12 t_k^2}&=\displaystyle\lim_{k\to \infty}\frac{e_\lambda f(y_k)-e_\lambda f(\bar{x})-
\frac{\lambda}{2}t_k^2\|v_k\|^2}{\frac12 t_k^2}\\
&=d^2e_\lambda f(\bar{x})(w+\lambda v)-\lambda\|v\|^2\\
&=e_\lambda(d^2f(\bar{x}\mid 0))(w+\lambda v)-\lambda\|v\|^2,
\end{array}
\end{equation}
where the second equality follows from the twice semidifferentiability of $e_\lambda f$ (cf. Lemma \ref{lem-prox-twice-epidiff} {\bf (b)}), and the third one follows from (\ref{fjjy000}).
As (\ref{lichangxi}) holds for all  $\lambda>0$   sufficiently small, we further  have
\[
\displaystyle\lim_{k\to \infty}\frac{f(\bar{x}+t_kw_k)-f(\bar{x})}{\frac12 t_k^2}=\displaystyle\lim_{\lambda\to 0+} \left(
e_\lambda(d^2f(\bar{x}\mid 0))(w+\lambda v)-\lambda\|v\|^2\right)=d^2f(\bar{x}\mid 0)(w),
\]
where the second equality follows from \cite[Theorem 1.25]{roc}. This completes the proof. \end{proof}

With Lemmas  \ref{lem-prox-regular}-\ref{lem-rest-semi} at hand, we now present our main results.
The idea in our argument is as follows: by assuming the prox-regularity of $f$ at $\bar{x}$ for 0 and the nonsingularity of $D(\partial f)(\bar{x}\mid 0)$, we observe that $u$ in (\ref{tfdf}) cannot be 0, implying by Theorem \ref{theo-relation} that $f$ satisfies the K{\L} property  at $\bar{x}$ with exponent $\frac12$; and  by further  assuming that $f$ is in addition twice epi-differentiable and  subdifferentially continuous  at $\bar{x}$ for $0$, we get from Lemma \ref{lem-rest-semi} that  $\tau$ in (\ref{tfdf}) is positive and equals to $d^2f(\bar{x}\mid 0)$, which, together with Theorem \ref{theo-good-lower-bound} (with $\theta=\frac12$) and (\ref{jbcz}), gives exact formulas for  $\partial^>g(\bar{x})$ with two alternative expressions.
By noting that, for all $\lambda>0$ sufficiently small,   $e_\lambda f$ is of class
 $\mathcal{C}^{1+}$ on some neighborhood of $\bar{x}$   and  prox-regular,  subdifferentially continuous  and twice epi-differentiable at $\bar{x}$ with the nonsingularity of $D(\nabla e_\lambda f)(\bar{x})$ being guaranteed by the nonsingularity of  $D(\partial f)(\bar{x}\mid 0)$, we are  able to    obtain the similar results for the envelopes functions  $e_\lambda f$, and to discuss some connections between the K{\L} properties of $f$ and $e_\lambda$.

\begin{theorem}\label{theo-prox-regular-twice-epidiff}
Assume that  $f:\R^n\to \overline{\R}$ is prox-regular at $\bar{x}$ for $0$. Let $g$ be given by (\ref{def-gg}). Assume  that $D(\partial f)(\bar{x}\mid 0)$ is nonsingular, i.e., 
\begin{equation}\label{fqy}
[D(\partial f)(\bar{x}\mid 0)]^{-1}(0)=\{0\}.
\end{equation}
  Then  $f$ satisfies the K{\L} property  at $\bar{x}$ with exponent $\frac12$,   which is {\bf sharp} if
\begin{equation}\label{bymm}
W:=\{w\in \R^n\mid 0<d^2f(\bar{x}\mid 0)(w)<+\infty\}\not=\emptyset.
\end{equation}
If in addition   $f$ is twice epi-differentiable and  subdifferentially continuous  at $\bar{x}$ for $0$,    we have
\begin{description}
   \item[(a)] $W\not=\emptyset$ if and only if there is some $v\in D(\partial f)(\bar{x}\mid 0)(w)$ such that $\langle v, w\rangle>0$. Moreover, we have
   \begin{equation}\label{m-deng-suofang}
       K{\L}(f,\bar{x}, \frac12)= K{\L}(\frac12 d^2f(\bar{x}\mid 0) , 0, \frac12)\geq  \sqrt{\frac{ 1}{2\,{\rm subreg}\,\partial f (\bar{x}\mid 0)}}>0,
       \end{equation}
         and
\begin{equation}\label{kuku}
\begin{array}{rl}
  \partial^>g(\bar{x})=&\displaystyle\bigcup_{w\in W} \frac{\frac12\partial [\frac12 d^2f(\bar{x}\mid 0)](w)}{\sqrt{ \frac12 d^2f(\bar{x}\mid 0)(w) }}\\
=&\left\{\displaystyle  \frac{v}{\sqrt{2\langle v, w\rangle}}\;\mid \;
v\in D(\partial f)(\bar{x}\mid 0)(w),\;
\langle v, w\rangle>0
\right\}.
\end{array}
\end{equation}
      \item[(b)] Define $\tilde{g}(x):=\sqrt{\max\{e_\lambda f(x)-e_\lambda f(\bar{x}), 0\}}$ and
      \[\widetilde{W}:= \{w\in \R^n\mid 0<e_\lambda (d^2f(\bar{x}\mid 0))(w)<+\infty\}.
      \]
      Then for all $\lambda>0$ sufficiently small, $e_\lambda f$ satisfies the K{\L} property  at $\bar{x}$ with exponent $\frac12$,   which is {\bf sharp}    if
$\widetilde{W}\not=\emptyset$ or equivalently there is  some $v\in D(\partial f)(\bar{x}\mid 0)(w)$ such that $\langle v, w\rangle+\lambda\|v\|^2>0$.
Moreover,  we have
\[
K{\L}(e_\lambda f,\bar{x}, \frac12)= K{\L}( e_\lambda(\frac12 d^2f(\bar{x}\mid 0)) , 0, \frac12)\geq \sqrt{\frac{1}{2\lambda+2\, {\rm subreg}\,\partial f (\bar{x}\mid 0)  }}>0,
\]
 and
\begin{equation}\label{mlbgongshi}
\begin{array}{rl}
& \partial^>\tilde{g}(\bar{x}) \\
&\hskip-0.65cm=\displaystyle \bigcup_{w\in \widetilde{W}} \frac{\frac12 \nabla e_\lambda (\frac12 d^2f(\bar{x}\mid 0))(w)}{\sqrt{e_\lambda( \frac12 d^2f(\bar{x}\mid 0))(w) }}  \\
=&\left\{\displaystyle \frac{v}{\sqrt{2\langle v, w\rangle+2\lambda\|v\|^2}}  \mid
v\in D(\partial f)(\bar{x}\mid 0)(w), \langle v, w\rangle+\lambda\|v\|^2>0
\right\}.
\end{array}
\end{equation}
\item[(c)] In the sense of Painlev\'{e}-Kuratowski convergence, we have
\begin{equation}\label{mlb_jixian}
\lim_{\lambda\to 0+}  \partial^>\tilde{g}(\bar{x})=\partial^>g(\bar{x}),
\end{equation}
which implies that
\begin{equation}\label{mo-danzeng}
K{\L}(e_\lambda f, \bar{x}, \frac12)\nearrow K{\L}(f, \bar{x}, \frac12) \quad\quad \mbox{as}\quad \lambda \searrow 0.
\end{equation}
\end{description}
\end{theorem}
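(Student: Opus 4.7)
\textbf{Plan for the proof of Theorem \ref{theo-prox-regular-twice-epidiff}.} The overall strategy is to use Theorem \ref{theo-relation} and Theorem \ref{theo-good-lower-bound} as the abstract framework, and to use Lemmas \ref{lem-prox-regular}--\ref{lem-rest-semi} as the bridge translating between the outer limiting subgradients of $g$ (or $\tilde g$) and the primal--dual objects $d^2f(\bar x\mid 0)$ and $D(\partial f)(\bar x\mid 0)$. First, I would dispose of the basic KL claim: by Lemma \ref{lem-prox-regular}, any $u\in \partial^>g(\bar x)$ satisfies $\sqrt{2\tau}\,u\in D(\partial f)(\bar x\mid 0)(w)$ for some $\tau\geq 0$ and $w\in \Sph$; if $u=0$, the nonsingularity assumption (\ref{fqy}) forces $w=0$, contradicting $w\in \Sph$. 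Hence $0\notin \partial^>g(\bar x)$ and Theorem \ref{theo-relation}(a) yields the KL property with exponent $\tfrac12$. The sharpness under (\ref{bymm}) is immediate from Theorem \ref{theo-good-lower-bound}(c), since the $\tfrac{1}{1-\theta}$-th subderivative with $\theta=\tfrac12$ is exactly $d^2f(\bar x\mid 0)$.

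For part (a), I would first read off the equivalence $W\neq\emptyset\iff$ there is $v\in D(\partial f)(\bar x\mid 0)(w)$ with $\langle v,w\rangle>0$ directly from (\ref{wjfdyjbdl}). For the formula (\ref{kuku}), I would prove the two inclusions. The inclusion ``$\subset$'' is refined from Lemma \ref{lem-prox-regular}: given $u\in \partial^>g(\bar x)$, produce $(x^k,v^k)$ with $v^k\in \partial f(x^k)$ and set $t_k:=\|x^k-\bar x\|$, $w_k:=(x^k-\bar x)/t_k$, so that $t_k\cdot (v^k/t_k)\in \partial f(\bar x+t_k w_k)$ with $v^k/t_k\to \sqrt{2\tau}\,u$; Lemma \ref{lem-rest-semi} then identifies $\lim (f(x^k)-f(\bar x))/(\tfrac12 t_k^2)$ with $d^2f(\bar x\mid 0)(w)$, so $\tau=d^2f(\bar x\mid 0)(w)>0$, and (\ref{wjfdyjbdl}) gives $u=v/\sqrt{2\langle v,w\rangle}$ with $v:=\sqrt{2\tau}\,u$. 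The inclusion ``$\supset$'' is verified by constructing, for each admissible $(v,w)$ with $\langle v,w\rangle>0$, sequences $t_k\downarrow 0$ and $w_k\to w$ with $t_k v\in \partial f(\bar x+t_k w_k)$ coming from the definition of $D(\partial f)(\bar x\mid 0)$ together with (\ref{jbcz}); Lemma \ref{lem-rest-semi} then certifies that $f(\bar x+t_k w_k)>f(\bar x)$ eventually and that the resulting ratio converges to $v/\sqrt{2\langle v,w\rangle}$. Combining this formula with (\ref{def-smallest-norm}), Theorem \ref{theo-good-lower-bound}(e), and (\ref{jbcz}) gives (\ref{m-deng-suofang}).

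For part (b), I would apply part (a) to $e_\lambda f$: Lemma \ref{lem-prox-twice-epidiff}(b) supplies prox-regularity, subdifferential continuity and twice epi-differentiability at $\bar x$ for $0$; nonsingularity of $D(\nabla e_\lambda f)(\bar x)$ follows from (\ref{jbcz000}) because $0\in D(\nabla e_\lambda f)(\bar x)(w)$ would force $w\in \lambda\cdot 0+[D(\partial f)(\bar x\mid 0)]^{-1}(0)=\{0\}$. The first expression in (\ref{mlbgongshi}) is then the verbatim instance of (\ref{kuku}) applied to $e_\lambda f$ via (\ref{fjjy000}) and (\ref{jbcz111}); the second expression follows from the identity $e_\lambda(\tfrac12 d^2f(\bar x\mid 0))(w)=\tfrac12(\langle v,w\rangle+\lambda\|v\|^2)$ with $v=\nabla e_\lambda(\tfrac12 d^2f(\bar x\mid 0))(w)$, obtained by differentiating the Moreau envelope via (\ref{jbcz000}).

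For part (c), I would establish the Painlev\'{e}--Kuratowski convergence (\ref{mlb_jixian}) by verifying the two inclusions on the outer and inner limits using the second expressions in (\ref{kuku}) and (\ref{mlbgongshi}): given $u_{\lambda}=v_\lambda/\sqrt{2\langle v_\lambda,w_\lambda\rangle+2\lambda\|v_\lambda\|^2}\to u$ with $v_\lambda\in D(\partial f)(\bar x\mid 0)(w_\lambda)$, rescale $(v_\lambda,w_\lambda)$ using positive homogeneity of $d^2f(\bar x\mid 0)$ and exploit closedness of $\gph D(\partial f)(\bar x\mid 0)$ to extract a subsequential limit $(v,w)$ producing $u\in \partial^>g(\bar x)$; conversely, for $u=v/\sqrt{2\langle v,w\rangle}\in \partial^>g(\bar x)$, the element $v/\sqrt{2\langle v,w\rangle+2\lambda\|v\|^2}$ lies in $\partial^>\tilde g(\bar x)$ and converges to $u$ as $\lambda\downarrow 0$. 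The monotonicity and convergence statement (\ref{mo-danzeng}) is then obtained by observing from (\ref{mlbgongshi}) that, for fixed admissible $(v,w)$, the norm $\|v\|/\sqrt{2\langle v,w\rangle+2\lambda\|v\|^2}$ decreases in $\lambda$, so that $K\L(e_\lambda f,\bar x,\tfrac12)=d(0,\partial^>\tilde g(\bar x))$ increases as $\lambda\downarrow 0$, with limit $d(0,\partial^>g(\bar x))=K\L(f,\bar x,\tfrac12)$ by (\ref{mlb_jixian}). The main obstacle I anticipate is the ``$\supset$'' inclusion of (\ref{kuku}) in part (a): one must produce strictly ascending feasible sequences realizing a prescribed pair $(v,w)$ with $\langle v,w\rangle>0$, which is precisely where Lemma \ref{lem-rest-semi} (and hence twice epi-differentiability plus subdifferential continuity) is indispensable in order to rule out degenerate limits with $\tau=0$.
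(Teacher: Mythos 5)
Your proposal is correct and follows essentially the same route as the paper: Lemma \ref{lem-prox-regular} together with the nonsingularity assumption for the basic K{\L} claim, Lemmas \ref{lem-prox-regular} and \ref{lem-rest-semi} for the two inclusions in (\ref{kuku}), Lemma \ref{lem-prox-twice-epidiff} to transfer everything to $e_\lambda f$, and the explicit parametrizations of $\partial^>g(\bar{x})$ and $\partial^>\tilde{g}(\bar{x})$ for the set convergence and monotonicity in {\bf (c)}. One small correction of emphasis: the place where (\ref{fqy}) is indispensable for excluding the degenerate case $\tau=0$ is the inclusion $\partial^>g(\bar{x})\subset\bigcup_{w\in W}(\cdots)$ (there $\tau=0$ would give $0\in D(\partial f)(\bar{x}\mid 0)(w)$ with $w\in \Sph$, and you assert $\tau>0$ without saying this), not the reverse inclusion, where $\langle v,w\rangle>0$ is already part of the hypothesis and Lemma \ref{lem-rest-semi} directly yields a positive limit --- indeed the paper obtains that reverse inclusion for free from Theorem \ref{theo-good-lower-bound} {\bf (b)} without invoking twice epi-differentiability at all.
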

\begin{proof}
 Suppose by contradiction that $f$ does not satisfy  the K{\L} property  at $\bar{x}$ with exponent $\frac12$, or equivalently  by Theorem \ref{theo-relation} that  $0\in \partial^>g(\bar{x})$. Then by Lemma \ref{lem-prox-regular}, we have $0\in D(\partial f)(\bar{x}\mid 0)(w)$ for some $w\in \Sph$, contradicting to our assumption (\ref{fqy}). This  indicates that  $f$ satisfies the K{\L} property  at $\bar{x}$ with exponent $\frac12$ whenever (\ref{fqy}) is fulfilled.  Whenever (\ref{bymm}) is satisfied, we get from  Theorem  \ref{theo-good-lower-bound} {\bf (b)} (with $\theta=\frac12$)  that $\partial^>g(\bar{x})\not=\emptyset$. Therefore, whenever both (\ref{fqy}) and (\ref{bymm}) are fulfilled, we get from Theorem \ref{theo-relation} that  $f$ satisfies the K{\L} property  at $\bar{x}$ with a {\bf sharp} K{\L} exponent  $\frac12$.

To show {\bf (a)}, we first get from  Theorem  \ref{theo-good-lower-bound} (with $\theta=\frac12$) that
\begin{equation}\label{paozhuanyy}
\bigcup_{w\in W } \frac{\frac12 \partial [\frac12 d^2f(\bar{x}\mid 0)](w)}{ \sqrt{ \frac12 d^2f(\bar{x}\mid 0)(w) }}\subset \partial^>g(\bar{x}).
\end{equation}
To show the reverse inclusion of (\ref{paozhuanyy}), let $u\in \partial^>g(\bar{x})$ be arbitrarily given. By Lemma \ref{lem-prox-regular},  there exist some $\tau\geq 0$,  $w\in \Sph$,   $x^k\to_f \bar{x}$ with $f(x^k)>f(\bar{x})$ and $v^k\in \partial f(x^k)$  such that
$\frac{f(x^k)-f(\bar{x})}{\frac{1}{2}\|x^k-\bar{x}\|^2}\to \tau$,   $\frac{x^k-\bar{x}}{\|x^k-\bar{x}\|}\to w$     and
\begin{equation}\label{tingyouqu}
v^k/\|x^k-\bar{x}\| \to  \sqrt{2\tau} u\in D(\partial f)(\bar{x}\mid 0)(w).
\end{equation}
Clearly,  we have $\tau\not=0$, for otherwise we  get from (\ref{tingyouqu}) that $0\in D(\partial f)(\bar{x}\mid 0)(w)$,
contradicting to  (\ref{fqy}). Therefore, we have $0<\tau<+\infty$.
Let $t^k:=\|x^k-\bar{x}\|$ and $w^k:=\frac{x^k-\bar{x}}{\|x^k-\bar{x}\|}$ for all $k$. Clearly, we have  $x^k=\bar{x}+t^kw^k$ for all $k$ and $t^k\to 0+$.  It follows from (\ref{tingyouqu}) and  Lemma \ref{lem-rest-semi} that
\begin{equation}\label{tingyouqu000}
\tau=\lim_{k\to \infty}\frac{f(x^k)-f(\bar{x})}{\frac{1}{2}\|x^k-\bar{x}\|^2}=d^2f(\bar{x}\mid 0)(w).
\end{equation}
Thus, we have  $0<d^2f(\bar{x}\mid 0)(w)<+\infty$ or in other words  $w\in W$. Moreover, it follows from  (\ref{jbcz}),  (\ref{tingyouqu}) and (\ref{tingyouqu000}) that
$
u\in  \frac{\frac12 \partial [\frac12 d^2f(\bar{x}\mid 0)](w)}{\sqrt{ \frac12 d^2f(\bar{x}\mid 0)(w) }}
$,
i.e., the reverse inclusion of (\ref{paozhuanyy}) is fulfilled. Therefore, the first equality in (\ref{kuku}) is verified. The second equality in (\ref{kuku})
follows readily from   (\ref{jbcz}) and (\ref{wjfdyjbdl}). According to  Theorem \ref{theo-good-lower-bound} (with $\theta=\frac12$) and (\ref{kuku}),  $W \not=\emptyset$  if and only if  there is some  $v\in D(\partial f)(\bar{x}\mid 0)(w)$ such that $\langle v, w\rangle>0$. In view of the definition of ${\rm subreg}\,\partial f (\bar{x}\mid 0)$ (cf. (\ref{def-smallest-norm})),  (\ref{kuku}) and the fact that $d^2f(\bar{x}\mid 0)$ is a proper function  with $d^2f(\bar{x}\mid 0)(0)=0$, we get (\ref{m-deng-suofang})  from Theorem \ref{theo-relation} {\bf (d)}  in a straightforward way. By now, all the results in {\bf (a)} are verified.

To show {\bf (b)}, we  note from Lemma \ref{lem-prox-twice-epidiff} {\bf (b)}  that for all $\lambda>0$ sufficiently small,  $e_\lambda f$ is of class
 $\mathcal{C}^{1+}$ on some neighborhood of $\bar{x}$ with $\nabla e_\lambda f(\bar{x})=0$, and is  prox-regular,  subdifferentially continuous  and twice epi-differentiable at $\bar{x}$. Moreover, we get from
Lemma \ref{lem-prox-twice-epidiff} {\bf (b)}  and (\ref{fqy})
that
$
[D(\nabla e_\lambda f)(\bar{x})]^{-1}(0)=\{0\}
$.
 Thus, for all $\lambda>0$ sufficiently small, by applying the results in {\bf (a)} to $e_\lambda f$ and by taking Lemma \ref{lem-prox-twice-epidiff}, in particular  (\ref{fjjy000})-(\ref{jbcz111}) into account, we get all the results in {\bf (b)}.

To show (\ref{mlb_jixian}), we first show
\begin{equation}\label{xblx}
\partial^>g(\bar{x})\subset \liminf_{\lambda\to 0+}  \partial^>\tilde{g}(\bar{x}).
\end{equation}
Let $u\in \partial^>g(\bar{x})$ be given arbitrarily. By (\ref{kuku}), there is some $v\in D(\partial f)(\bar{x}\mid 0)(w)$ with $\langle v, w\rangle >0$ such that
$u=\frac{1}{\sqrt{2}}\frac{v}{\sqrt{\langle v, w\rangle}}$. Then  for all $\lambda>0$ sufficiently small, we get from (\ref{mlbgongshi}) that
$\tilde{u}(\lambda):=\frac{1}{\sqrt{2}}\frac{v}{\sqrt{\langle v, w\rangle+\lambda\|v\|^2}}=\frac{u}{\sqrt{1+2\lambda\|u\|^2}}\in \partial^>\tilde{g}(\bar{x})$. 
Since $\tilde{u}(\lambda)\to u$ as $\lambda\to 0+$ and  $u\in \partial^>g(\bar{x})$ is given arbitrarily, we get (\ref{xblx}).
To show (\ref{mlb_jixian}), it suffices to show
\begin{equation}\label{sblx}
\limsup_{\lambda\to 0+}  \partial^>\tilde{g}(\bar{x})\subset\partial^>g(\bar{x}).
\end{equation}
Let $u^*\in\limsup_{\lambda\to 0+}  \partial^>\tilde{g}(\bar{x})$. By (\ref{mlbgongshi}), there exist
$\lambda_k\to 0+$ and  $v_k\in D(\partial f)(\bar{x}\mid 0)(w_k)$
with $\langle v_k, w_k\rangle+\lambda_k\|v_k\|^2>0$ for all $k$ such that
\begin{equation}\label{ongongong}
u_k:=\frac{1}{\sqrt{2}}\frac{v_k}{\sqrt{\langle v_k, w_k\rangle+\lambda_k\|v_k\|^2}}\to u^*.
\end{equation}
Clearly, we have $w_k\not=0$ for all $k$ sufficiently large, for otherwise  $u_k=\frac{1}{\sqrt{2\lambda_k}}\frac{v_k}{\|v_k\|}$ cannot converge to $u^*$.
By the positive homogeneity of $D(\partial f)(\bar{x}\mid 0)$ and by taking a subsequence if necessary, we can assume that $w_k\in \Sph$ for all $k$,  and   that
$w_k\to w^*\in \Sph$.  As $\lambda_k\to 0+$ and
$  \|u_k\|\geq \frac{1}{\sqrt{2}} \frac{\|v_k\|}{\sqrt{\|v_k\|+\lambda_k\|v_k\|^2}}\geq
\frac{1}{\sqrt{2}} \frac{1}{\sqrt{\frac{1}{\|v_k\|}+\lambda_k}}$ for all $k$,  
we claim that $\{v_k\}$ is bounded, for otherwise $\{u_k\}$ must be unbounded, contradicting to (\ref{ongongong}). So by taking a subsequence again  if necessary,
we may assume that $v_k\to v^*$. Then we have $ \langle v_k, w_k\rangle+\lambda_k\|v_k\|^2 \to  \langle v^*, w^*\rangle \geq 0$.
In view of (\ref{ongongong}), we
get $v_k=\sqrt{2}\sqrt{\langle v_k, w_k\rangle+\lambda_k\|v_k\|^2} u_k$ for all $k$. This gives  $v^*=\sqrt{2}\sqrt{\langle v^*, w^*\rangle}u^*$.
By the closedness of $D(\partial f)(\bar{x}\mid 0)$, we have $v^*\in D(\partial f)(\bar{x}\mid 0)(w^*)$. By  (\ref{fqy}), we have $v^*\not=0$ and
hence $u^*=\frac{1}{\sqrt{2}}\frac{v^*}{\sqrt{\langle v^*, w^*\rangle}}$ with
$\langle v^*, w^*\rangle>0$. According to  (\ref{kuku}), we have  $u^*\in \partial^>g(\bar{x})$.
This verifies (\ref{sblx}) and hence (\ref{mlb_jixian}).

To show (\ref{mo-danzeng}), by Theorem \ref{theo-relation} we have
 $\mbox{K}{\L}(f, \bar{x}, \frac12)= d\left(0, \partial^>g(\bar{x})\right)$  and that $\mbox{K}{\L}(e_\lambda f, \bar{x}, \frac12)=  d\left(0, \partial^>\tilde{g}(\bar{x})\right)$. In view of the set convergence in (\ref{mlb_jixian}), we get from
the pointwise convergence of distance function  \cite[Corollary 4.7]{roc} that $\mbox{K} {\L}(e_{\lambda} f, \bar{x}, \frac12)\to \mbox{K} {\L}(f, \bar{x}, \frac12) $ as
$\lambda\searrow 0$.
It remains to show the monotonicity of $\mbox{K}{\L}(e_{\lambda} f, \bar{x}, \frac12)$ as  $\lambda\searrow 0$. Let $\lambda_2>0$ be sufficiently small such that (\ref{mlbgongshi}) is fulfilled with $\lambda=\lambda_2$. Let $0<\lambda_1<\lambda_2$. Clearly,
(\ref{mlbgongshi}) is also fulfilled with $\lambda=\lambda_1$.  Let $\tilde{g}_i:=\tilde{g}$ with $\lambda=\lambda_i$ for $i=1,2$. Let $u_1\in \partial^>\tilde{g}_1(\bar{x})$ be given arbitrarily.  By (\ref{mlbgongshi}), there is some $v\in D(\partial f)(\bar{x}\mid 0)(w)$ with
$\langle v, w\rangle +\lambda_1\|v\|^2>0$ such that
$
u_1=\frac{1}{\sqrt{2}}\frac{v}{\sqrt{\langle v, w\rangle +\lambda_1\|v\|^2}}
$.
Then we have $\langle v, w\rangle +\lambda_2\|v\|^2>0$ and
$
u_2:=\frac{1}{\sqrt{2}}\frac{v}{\sqrt{\langle v, w\rangle +\lambda_2\|v\|^2}}\in \partial^>\tilde{g}_2(\bar{x})
$.
As $\|u_2\|\leq \|u_1\|$ and  $u_1\in \partial^>\tilde{g}_1(\bar{x})$ is given arbitrarily, we have
$d\left(0, \partial^>\tilde{g}_2(\bar{x})\right)\leq d\left(0, \partial^>\tilde{g}_1(\bar{x})\right)$ 
and
$
\mbox{K}{\L}(e_{\lambda_2} f, \bar{x}, \frac12)\leq  \mbox{K}{\L}(e_{\lambda_1} f, \bar{x}, \frac12).
$
That is,   $\mbox{K} {\L}(e_{\lambda} f, \bar{x}, \frac12)$ is nondecreasing as $\lambda\searrow 0$.  This completes the proof. \end{proof}

 The following corollary relates the nonsigularity of $D(\partial f)(\bar{x}\mid 0)$ with the positive definiteness of $D(\partial f)(\bar{x}\mid 0)$ or in other words the quadratic growth condition at $\bar{x}$.
\begin{corollary}\label{cor-second-order-growth}
Assume that $f:\R^n\to \overline{\R}$ is prox-regular, subdifferentially continuous  and  twice epi-differentiable at $\bar{x}$ for $0$. Consider the following conditions:
\begin{description}
\item[(i)]   $f$ satisfies the quadratic growth condition at $\bar{x}$.
\item[(ii)] The inequality  $d^2f(\bar{x}\mid 0)(w)>0$ holds for all $w\not=0$.
\item[(iii)] $C:=\{w\in \R^n\mid d^2f(\bar{x}\mid 0)(w)\leq 1\}$ is bounded.
\item[(iv)] $D(\partial f)(\bar{x}\mid 0)$ is positive definite.
\item[(v)] $[D(\partial f)(\bar{x}\mid 0)]^{-1}(0)=\{0\}$, i.e., $D(\partial f)(\bar{x}\mid 0)$ is nonsingular.
\end{description}
Then ${\bf (i)}\Longleftrightarrow {\bf (ii)}\Longleftrightarrow {\bf (iii)}\Longleftrightarrow {\bf (iv)}\Longrightarrow {\bf (v)}$  and all the conditions are equivalent in the case of  $f$ being convex.
If one of conditions {\bf (i)-(iv)} is satisfied,   then $f$ satisfies the K{\L} property  at $\bar{x}$ with exponent  $\frac12$, which is {\bf sharp}  if   $\dom [D(\partial f)(\bar{x}\mid 0)]\not=\{0\}$  or in other words  $C\not=\{0\}$.
\end{corollary}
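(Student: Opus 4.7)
The plan is to reduce the equivalences to the key identities provided by Lemma~\ref{lem-prox-twice-epidiff}(a), and then to invoke Theorem~\ref{theo-prox-regular-twice-epidiff} for the K{\L} conclusion. Step 1: the equivalence (i)$\Leftrightarrow$(ii) is the classical characterization of quadratic growth \cite[Theorem 13.24]{roc} cited in the Introduction, which requires only that $f$ be prox-regular and twice epi-differentiable at $\bar{x}$ for $0$. Step 2: (ii)$\Leftrightarrow$(iii) follows from the lower semicontinuity and positive homogeneity of degree $2$ of $d^{2}f(\bar{x}\mid 0)$, properties already recorded in Lemma~\ref{lem-prox-twice-epidiff}(a). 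Indeed, (ii) implies $m:=\min_{\|w\|=1}d^{2}f(\bar{x}\mid 0)(w)>0$ by compactness, so $d^{2}f(\bar{x}\mid 0)(w)\ge m\|w\|^{2}$ and $C\subset m^{-1/2}\B$; conversely, if (ii) fails there is $w_{0}\ne 0$ with $d^{2}f(\bar{x}\mid 0)(w_{0})\le 0$, and positive homogeneity makes the ray $\{tw_{0}:t>0\}$ sit inside $C$, contradicting (iii).

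Step 3: (ii)$\Leftrightarrow$(iv) is a direct translation through identity (\ref{wjfdyjbdl}): for $w\ne 0$, either $w\notin\dom D(\partial f)(\bar{x}\mid 0)$, in which case $d^{2}f(\bar{x}\mid 0)(w)=+\infty>0$ trivially, or else every $v\in D(\partial f)(\bar{x}\mid 0)(w)$ satisfies $d^{2}f(\bar{x}\mid 0)(w)=\langle v,w\rangle$, so the sign conditions on both sides coincide. Step 4: (iv)$\Rightarrow$(v) is immediate, since $0\in D(\partial f)(\bar{x}\mid 0)(w)$ forces $\langle 0,w\rangle >0$ unless $w=0$. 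For the convex case, we recover (v)$\Rightarrow$(iv) (hence the full equivalence with (i)--(iv)) by noting that (v) is the algebraic form of strong metric subregularity of $\partial f$ at $\bar{x}$ for $0$ (cf.\ \cite[Theorem 4E.1]{dr14}), which for convex $f$ is equivalent to the quadratic growth condition (i) by \cite[Corollary 3.7]{ag14}; this closes the loop.

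Step 5: once any of (i)--(iv) is in force, (v) holds, so Theorem~\ref{theo-prox-regular-twice-epidiff} applies and yields the K{\L} property of $f$ at $\bar{x}$ with exponent $\tfrac12$. For sharpness I would argue that $\dom D(\partial f)(\bar{x}\mid 0)\ne\{0\}$ is equivalent to $W\ne\emptyset$ under (iv): if $w\ne 0$ lies in $\dom D(\partial f)(\bar{x}\mid 0)$ and $v\in D(\partial f)(\bar{x}\mid 0)(w)$, then (iv) forces $\langle v,w\rangle>0$, hence by (\ref{wjfdyjbdl}) we have $0<d^{2}f(\bar{x}\mid 0)(w)=\langle v,w\rangle<+\infty$ and $w\in W$; conversely $W\ne\emptyset$ trivially places such a $w$ in the effective domain. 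The reformulation $C\ne\{0\}$ uses positive homogeneity of degree~$2$: $w\in\dom D(\partial f)(\bar{x}\mid 0)\setminus\{0\}$ implies $d^{2}f(\bar{x}\mid 0)(tw)\le 1$ for small $t>0$, and $C\ne\{0\}$ supplies a finite-valued direction. Theorem~\ref{theo-prox-regular-twice-epidiff} then declares the exponent $\tfrac12$ sharp.

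The routine algebra is contained in Steps 2--4; the only delicate point is the convex-case reversal (v)$\Rightarrow$(iv), which is the main obstacle. The cleanest route is to invoke the strong-metric-subregularity/quadratic-growth equivalence from \cite{ag14} rather than trying to prove positive definiteness of $D(\partial f)(\bar{x}\mid 0)$ directly from nonsingularity; the monotonicity of $\partial f$ only yields positive semidefiniteness, so a purely graphical-derivative argument would leave a gap that the analytic characterization through quadratic growth exactly fills.
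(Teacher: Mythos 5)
Your proposal is correct, and its skeleton matches the paper's: (i)$\Leftrightarrow$(ii) via \cite[Theorem 13.24]{roc}, (ii)$\Leftrightarrow$(iii) via lower semicontinuity and positive homogeneity, (iv)$\Rightarrow$(v) trivially, and the K{\L} conclusion via Theorem \ref{theo-prox-regular-twice-epidiff}. Two sub-arguments differ genuinely. First, you prove (ii)$\Leftrightarrow$(iv) directly from the identity (\ref{wjfdyjbdl}) of Lemma \ref{lem-prox-twice-epidiff}\,{\bf (a)} (the sign of $d^2f(\bar{x}\mid 0)(w)$ equals the sign of $\langle v,w\rangle$ for $v\in D(\partial f)(\bar{x}\mid 0)(w)$, with the out-of-domain case handled vacuously), whereas the paper instead closes the chain by citing \cite[Theorem 3.7]{chnt21} for (i)$\Leftrightarrow$(iv); your route is more self-contained given the lemmas already established, and is a legitimate use of the corollary's standing hypotheses. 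Second, for the convex-case reversal you go through strong metric subregularity: (v) is equivalent to strong metric subregularity of $\partial f$ at $\bar{x}$ for $0$ by \cite[Theorem 4E.1]{dr14}, which for convex $f$ is equivalent to quadratic growth by Arag\'on Artacho--Geoffroy. The paper instead gives a short internal argument proving (v)$\Rightarrow$(ii): if $d^2f(\bar{x}\mid 0)(w)\leq 0$ for some $w\neq 0$, convexity forces $d^2f(\bar{x}\mid 0)\geq 0$ (by \cite[Proposition 13.20]{roc}), so $w$ minimizes $\frac12 d^2f(\bar{x}\mid 0)$ and optimality together with (\ref{jbcz}) gives $0\in D(\partial f)(\bar{x}\mid 0)(w)$, contradicting (v). Your external-citation route is valid but leans on results outside the paper (and the precise statement you want is the strong-subregularity/quadratic-growth equivalence in \cite{ag14} rather than its Corollary 3.7 specifically), while the paper's argument is two lines and uses only machinery already in play; either is acceptable. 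Your elaboration of the sharpness condition ($\dom D(\partial f)(\bar{x}\mid 0)\neq\{0\}$ iff $W\neq\emptyset$ iff $C\neq\{0\}$ under (iv)) is correct and fills in details the paper compresses into ``we get the remaining results immediately.''
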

\begin{proof}  The equivalence of {\bf (i)} and {\bf (ii)} can be found in \cite[Theorem 13.24]{roc}. The equivalence of  {\bf (ii)} and {\bf (iii)} follows readily from the lsc and the positive homogeneity  of $d^2f(\bar{x}\mid 0)$.
  The equivalence of {\bf (i)} and {\bf (iv)} can be found in  \cite[Theorem 3.7]{chnt21}. The implication from {\bf (iv)} to  {\bf (v)} is straightforward.
To show that all the conditions are equivalent by assuming that $f$ is convex,  it suffices to show ${\bf (v)}\Longrightarrow {\bf (ii)}$. Suppose by contradiction that {\bf (ii)} is not satisfied, i.e., there is some $w\not=0$ such that $d^2f(\bar{x}\mid 0)(w)\leq 0$. As $f$ is convex, we get from \cite[Proposition 13.20]{roc} that $d^2f(\bar{x}\mid 0)\geq 0$. So we actually have $d^2f(\bar{x}\mid 0)(w)=0$. This indicates  that   $w$ is a minimum of  $\frac12 d^2f(\bar{x}\mid 0)$. Then  by optimality we have  $0\in \partial [\frac12 d^2f(\bar{x}\mid 0)]=D(\partial f)(\bar{x}\mid 0)(w)$, contradicting to {\bf (v)}. Applying Theorem \ref{theo-prox-regular-twice-epidiff}, we get the remaining results immediately. \end{proof}

The following example demonstrates that, without the prox-regularity of $f$ at $\bar{x}$ for 0,     $f$ may not satisfy the K{\L} property  at $\bar{x}$ with any exponent $\theta\in [0, 1)$, even if the nonsingularity condition (\ref{fqy}), the quadratic growth condition and twice epi-differentiability  are all fulfilled.
\begin{example} Consider a function  $f:\R\to \R$  such that $f(0)=0$, $f(x)=x^2+\frac14$ if $|x|>\frac{1}{2}$, and $f(x)=x^2+\frac{1}{n}-\frac{1}{n^2}$ if $\frac{1}{n}<|x|\leq \frac{1}{n-1}$ for $n=3,4,\cdots$. It is easy to verify  that   $f$ is  continuous and twice epi-differentiable at 0 for 0 with $d^2f(\bar{x}\mid 0)=\delta_{\{0\}}$  and
\[
\partial f(x)=\left\{
\begin{array}{ll}
\R & \mbox{if}\;x= 0,\\[0.01cm] 
[2x, \infty) & \mbox{if}\;x= \frac{1}{n-1},\;n=3,4,\cdots,\\ 
(-\infty, 2x] & \mbox{if}\;x=-\frac{1}{n-1},\; n=3,4,\cdots,\\ 
\{2x\} & \mbox{otherwise}.
\end{array}
\right.
\]
Let $x^k:=\frac{1}{k-\frac12}$, $y^k:=0$, $u^k:=2x^k$ and $v^k:=\frac{1}{\sqrt{k}}$. Then we have $(x^k, u^k)\to (0, 0)$ and $(y^k, v^k)\to (0, 0)$ with
$u^k\in \partial f(x^k)$ and $v^k\in \partial f(y^k)$ for all $k$. Then for any $\rho\in \R_+$,  $\partial f+\rho I$ cannot be monotone  as
$
\lim_{k\to +\infty}\frac{(u^k-v^k)(x^k-y^k)}{(x^k-y^k)^2}=\lim_{k\to +\infty}(2+\frac{1}{2\sqrt{k}}-\sqrt{k})= -\infty$. 
By \cite[Theorem 13.36]{roc}, $f$ is not prox-regular at 0 for 0.
By definition we have
\[
D(\partial f)(0\mid 0)(w)=\left\{
\begin{array}{ll}
[2w,+\infty) &\mbox{if}\;w>0,\\
\R&\mbox{if}\;w=0,\\
(-\infty, 2w]&\mbox{if}\;w<0,
\end{array}
\right.
\]
and thus $[D(\partial f)(0\mid 0)]^{-1}(0)=\{0\}$.  Moreover,  the quadratic growth condition holds at 0 by definition. Observing that $D(\partial f)(0\mid 0)$ is positively definite, the quadratic growth condition at 0 can also obtained from \cite[Theorem 3.2]{chnt21}. But for $z^k:=\frac{1}{k-1}$ and $w^k:=2z^k\in \partial f(z^k)$, we have
$\lim_{k\to \infty}\frac{(1-\theta)w^k}{\left(f(z^k)-f(0)\right)^{\theta}}=\lim_{k\to \infty}\frac{2(1-\theta)k^\theta}{k-1}=0$ for all $\theta\in [0, 1)$,
which implies by (\ref{wcwf}) that $0\in \partial^>f^{1-\theta}(0)$ for all $\theta\in [0, 1)$.  Therefore  by Theorem \ref{theo-relation}, $f$ cannot  satisfy the K{\L} property  at $0$ with any exponent $\theta\in [0,1)$.
\end{example}

\section{Basic examples in optimization}\label{sec-basic-example}
\subsection{Smooth  functions}
\hskip-0.1cm In this subsection, we study the K{\L} property of  smooth functions of class $\mathcal{C}^2$ by applying Theorem \ref{theo-prox-regular-twice-epidiff} in a direct way.
\begin{proposition} \label{prop-nonsingular}
Assume that $f:\R^n\to \overline{\R}$ is twice continuously differentiable around  some  $\bar{x}\in \R^n$ with $\nabla f(\bar{x})=0$ and $\nabla^2 f(\bar{x})$ being nonsingular. The following hold:
\begin{description}
  \item[(a)] In the case of $\nabla^2 f(\bar{x})$ being negative definite,   $f$ satisfies the K{\L} property at $\bar{x}$ with exponent $0$, which is not {\bf sharp} as $\partial^>f(\bar{x})=\emptyset$.
    \item[(b)] In the case of $\nabla^2 f(\bar{x})$ being positive definite or indefinite, $f$ satisfies the K{\L} property at $\bar{x}$ with a  {\bf sharp} exponent of $\frac12$. Moreover, we have $K{\L} \left(f,\bar{x}, \frac12\right)=  \sqrt{\frac{\lambda_{\min}}{2}}$ with $\lambda_{\min}$ being  the smallest positive eigenvalue of $\nabla^2 f(\bar{x})$. In this case, we have 
\begin{equation}\label{vip-formular}
\partial^{>}g(\bar{x})=\left\{\frac{\sqrt{2}}{2} \frac{\nabla^2 f(\bar{x})w}{\sqrt{w^T\nabla^2 f(\bar{x})w}}\; \Big | \;w^T\nabla^2 f(\bar{x})w>0 \right\},
\end{equation}
where $g(x):=\sqrt{\max\{f(x)-f(\bar{x}), 0\}}$.
\end{description}
\end{proposition}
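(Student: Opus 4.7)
The plan is to reduce both parts to direct applications of Theorem \ref{theo-prox-regular-twice-epidiff}, after unpacking the second-order data of a $\mathcal{C}^2$ function. Since $f$ is twice continuously differentiable around $\bar{x}$ with $\nabla f(\bar{x})=0$, the function $f$ is prox-regular, subdifferentially continuous, and twice epi-differentiable at $\bar{x}$ for $0$, and the subderivative/graphical-derivative data reduce to
\[
d^2 f(\bar{x}\mid 0)(w) = \langle \nabla^2 f(\bar{x}) w, w\rangle \quad \mbox{and}\quad D(\partial f)(\bar{x}\mid 0)(w)=\{\nabla^2 f(\bar{x})w\}
\]
for every $w\in \R^n$. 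Hence the nonsingularity hypothesis on $\nabla^2 f(\bar{x})$ is exactly condition (\ref{fqy}).

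For case (a), I would use the second-order Taylor expansion together with the negative definiteness of $\nabla^2 f(\bar{x})$ to produce $\kappa>0$ and $\delta>0$ with $f(x)\leq f(\bar{x})-\kappa\|x-\bar{x}\|^2$ on $\B_\delta(\bar{x})$. No sequence $x^k\to\bar{x}$ can then satisfy $f(x^k)>f(\bar{x})$, so Definition \ref{def-subgradients}(d) forces $\partial^>f(\bar{x})=\emptyset$ and the K{\L} inequality (\ref{def-kl-ori}) with $\theta=0$ holds vacuously on $\B_\delta(\bar{x})$. The convention in Definition \ref{def-kl}(iii) then rules out the sharpness of the exponent $0$.

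For case (b), either positive definiteness or indefiniteness of $\nabla^2 f(\bar{x})$ supplies an eigenvector associated with a positive eigenvalue, whence $W\neq\emptyset$ in the sense of (\ref{bymm}). With (\ref{fqy}) already verified, Theorem \ref{theo-prox-regular-twice-epidiff} delivers the K{\L} property at $\bar{x}$ with sharp exponent $\frac12$, and substituting $D(\partial f)(\bar{x}\mid 0)(w)=\{\nabla^2 f(\bar{x})w\}$ into (\ref{kuku}) reproduces the explicit description (\ref{vip-formular}). For the modulus, Theorem \ref{theo-prox-regular-twice-epidiff}(a) supplies the equality $K{\L}(f,\bar{x},\tfrac12)=K{\L}(\tfrac12 d^2 f(\bar{x}\mid 0),0,\tfrac12)$, and the right-hand side was already evaluated by a spectral-decomposition computation in the discussion following Theorem \ref{theo-good-lower-bound} to be $\sqrt{\lambda_{\min}/2}$. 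Since the entire argument is driven by Theorem \ref{theo-prox-regular-twice-epidiff}, there is no substantive obstacle; the only hand-computations required are the vacuous emptiness $\partial^>f(\bar{x})=\emptyset$ in case (a) and the nonemptiness of $W$ in case (b), both of which are immediate from Taylor expansion and the definiteness hypotheses on $\nabla^2 f(\bar{x})$.
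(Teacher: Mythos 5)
Your proposal is correct and follows essentially the same route as the paper: identify a $\mathcal{C}^2$ function as prox-regular, subdifferentially continuous and twice epi-differentiable with $d^2f(\bar{x}\mid 0)(w)=\langle w,\nabla^2f(\bar{x})w\rangle$ and $D(\partial f)(\bar{x}\mid 0)(w)=\{\nabla^2f(\bar{x})w\}$, dispose of the negative definite case via $\partial^>f(\bar{x})=\emptyset$, and in the remaining case invoke Theorem \ref{theo-prox-regular-twice-epidiff} together with a spectral-decomposition evaluation of the modulus. The only cosmetic difference is that you cite the spectral computation already carried out in the discussion following Theorem \ref{theo-good-lower-bound}, whereas the paper repeats that computation in the proof itself.
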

\begin{proof} In the case of $\nabla^2 f(\bar{x})$ being negative definite, $f$ admits a (strict) local maximum at $\bar{x}$, implying by definition that $\partial^>f(\bar{x})=\emptyset$ and hence that  $f$ satisfies the K{\L} property at $\bar{x}$ with exponent $0$, which is not {\bf sharp} by definition. In what follows we assume that $\nabla^2 f(\bar{x})$ is positive definite or indefinite, in which case $W:=\{w\mid 0<\langle w, \nabla^2f(\bar{x})w\rangle<+\infty\}\not=\emptyset$.   As $f$ is assumed to be twice continuously differentiable around      $\bar{x}$ with $\nabla f(\bar{x})=0$, $f$ is   fully amenable (hence prox-regular and  twice epi-differentible) and subdifferentially continuous at $\bar{x}$ for 0 with
$$
d^2f(\bar{x}\mid 0)(w)=\langle w, \nabla^2f(\bar{x})w\rangle \quad\mbox{and}\quad D(\partial f)(\bar{x}\mid 0)(w)=\nabla^2f(\bar{x})w \quad\forall w\in \R^n.
$$
By virtue of Theorem \ref{theo-prox-regular-twice-epidiff}, we get all the results except for the equality $\mbox{K}{\L} \left(f,\bar{x}, \frac12\right)=  \sqrt{\frac{\lambda_{\min}}{2}}$. To this end,  we apply the spectral decomposition theorem for a
real symmetric matrix to  find an orthogonal matrix $U$ such that
$
\nabla^2 f(\bar{x})=U \Lambda U^T$,
where  $\Lambda$ is a diagonal matrix with its diagonal entries  $\lambda_i$'s  being the eigenvalues of  $\nabla^2 f(\bar{x})$.
Let $w\in \R^n$  be any vector such that
$w^T\nabla^2 f(\bar{x})w>0$ and let
$ v:=\frac{\sqrt{2}}{2} \frac{\nabla^2 f(\bar{x})w}{\sqrt{w^T\nabla^2 f(\bar{x})w}} $.
In terms of $y:=U^Tw$, we have $w^T\nabla^2 f(\bar{x})w>0$ if and only if $\sum_{j=1}^n\lambda_jy_j^2>0$,  and moreover
$\|v\|^2=\displaystyle\frac{1}{2}\frac{\sum_{i=1}^n\lambda_i^2y_i^2}{\sum_{j=1}^n\lambda_jy_j^2}=\displaystyle\frac{1}{2}\sum_{i=1}^n \frac{\lambda_iy_i^2}{\sum_{j=1}^n\lambda_jy_j^2}\lambda_i$.
As $\sum_{i=1}^n \frac{\lambda_iy_i^2}{\sum_{j=1}^n\lambda_jy_j^2}=1$, we have $\|v\|^2\geq \frac{1}{2}\lambda_{\min}$ and the equality holds  whenever  $y_i=0$ for all $i$ with $\lambda_i\not=\lambda_{\min}$. This verifies the equality  $\mbox{K}{\L} \left(f,\bar{x}, \frac12\right)=  \sqrt{\frac{\lambda_{\min}}{2}}$  when (\ref{vip-formular}) and Theorem \ref{theo-relation} are taken into account.  The proof is   completed.  \end{proof}


\subsection{The  pointwise max  of finitely many smooth functions}
In this subsection, we consider a function  $f:=\max\{f_1, \cdots, f_m\}$ for functions $f_i$ of class $\mathcal{C}^2$ on $\R^n$ and a point $\bar{x}\in \R^n$ with $0\in \partial f(\bar{x})$.  According to \cite[Example 10.24 and Theorems 13.16]{roc},  $f$ is  fully amenable everywhere  and thus  prox-regular, twice epi-differentiable, and subdifferentially continuous at $\bar{x}$ for  0 (hence  Theorem \ref{theo-prox-regular-twice-epidiff} is applicable) with
\begin{equation}\label{second-order-sub}
d^2f(\bar{x}\mid 0)(w)= \max_{\lambda \in\Lambda(\bar{x})}  \{\sum_{i\in I(\bar{x})} \lambda _i\langle\nabla^2f_i(\bar{x})w,\;w\rangle \}+ \delta_{\mathcal{V}(\bar{x})}(w),
\end{equation}
where
$
I(\bar{x}):=\{i\mid f_i(\bar{x})=f(\bar{x})\}
$,  $\mathcal{V}(\bar{x}):=\{w\in \R^n\mid \langle \nabla f_i(\bar{x}), \;w\rangle\leq 0\;\forall i\in I(\bar{x})\}$  and
$$
\Lambda(\bar{x}):= \{\lambda\in \R^m\mid \lambda_i\geq 0\;\forall i\in I(\bar{x}),\; \sum_{i\in I(\bar{x}) }\lambda_i=1, \sum_{i\in I(\bar{x}) }\lambda_i\nabla f_i(\bar{x})=0 \}.
$$
Moreover, it has been shown in  \cite[Theorem 2]{pr94} that 
$$
D(\partial f)(\bar{x}\mid 0)(w)= \{\sum_{i\in I(\bar{x})} \lambda _i \nabla^2f_i(\bar{x})w\mid \lambda\in \Lambda(\bar{x}, w) \}+N_{\mathcal{V}(\bar{x})}(w),$$
where $\Lambda(\bar{x}, w)$ is the optimal solution set of the linear programming problem
\begin{equation}\label{primal}
 \max_{\lambda} \quad\displaystyle\sum_{i\in I(\bar{x})} \lambda _i\langle\nabla^2f_i(\bar{x})w,\;w\rangle\quad \mbox{subject to}\quad  \lambda\in \Lambda(\bar{x}), 
\end{equation}
whose dual can be formulated as
\begin{equation}\label{dual}
 \min_{(\kappa, z)} \quad \kappa \quad \mbox{subject to}\quad \kappa-\langle \nabla f_i(\bar{x}), z \rangle-\langle \nabla^2 f_i(\bar{x})w, w \rangle\geq 0\quad \forall i \in I(\bar{x}).
\end{equation}
Then it follows from  the complementary slackness conditions and characterization of optimality for linear programming problems \cite[Theorem 2.7.3 and its  Corollary 3]{bss06} that, for any feasible  $\lambda$  to (\ref{primal})  and any feasible $(\kappa, z)$ to (\ref{dual}), $\lambda$ and $(\kappa, z)$ are, respectively, optimal  to (\ref{primal}) and  (\ref{dual}) if and only if $\lambda_i(\kappa-\langle \nabla f_i(\bar{x}), z \rangle-\langle \nabla^2 f_i(\bar{x})w, w \rangle)=0$ for all $i\in I(\bar{x})$.
Therefore, for any $(w, v)\in \R^n \times \R^n$,  we have $v\in D(\partial f)(\bar{x}\mid 0)(w)$ if and only if  there is some $(\kappa, \lambda, \beta,   z)\in \R\times \R^m\times \R^m\times \R^n$ such that
$v=\sum_{i\in I(\bar{x})}\left[\lambda_i\nabla^2f_i(\bar{x})w+\beta_i\nabla f_i(\bar{x})\right]$ with 
\begin{equation}\label{ys1}
\sum_{i\in I(\bar{x})}\lambda_i=1, \quad \sum_{i\in I(\bar{x})}\lambda_i\nabla f_i(\bar{x})=0,
\end{equation}
\begin{equation}\label{ys3}
\left.
\begin{array}{r}
\beta_i\geq 0,\;\langle \nabla f_i(\bar{x}), w\rangle\leq 0,\; \beta_i\,\langle \nabla f_i(\bar{x}), w\rangle=0\\
\lambda_i\geq 0,\;\kappa-\langle \nabla f_i(\bar{x}), z \rangle-\langle \nabla^2 f_i(\bar{x})w, w \rangle\geq 0\\
\lambda_i\,[\kappa-\langle \nabla f_i(\bar{x}), z \rangle-\langle \nabla^2 f_i(\bar{x})w, w \rangle\,]=0
\end{array}
\right\}\quad \forall i\in I(\bar{x}).
\end{equation}
Moreover, for any $(\kappa, \lambda, \beta, w, z)\in \R\times \R^m\times \R^m\times \R^n\times \R^n$ satisfying (\ref{ys1}) and (\ref{ys3}), we get  $d^2f(\bar{x}\mid 0)(w)=\kappa$. In view of these formulas and results just recalled, we  present the following proposition by applying Theorem \ref{theo-prox-regular-twice-epidiff} to $f$ in a straightforward way.

\begin{proposition}
Consider a function  $f:=\max\{f_1, \cdots, f_m\}$ for functions $f_i$ of class $\mathcal{C}^2$ on $\R^n$ and a point $\bar{x}\in \R^n$ with $0\in \partial f(\bar{x})$.
Assume that one of the following equivalent conditions holds: 
\begin{description}
\item[(a)] $D(\partial f)(\bar{x}\mid 0)^{-1}(0)=\{0\}$.
\item[(b)] For each $\bar{w}\not=0$ with  $d^2f(\bar{x}\mid 0)(\bar{w})=0$,  the following implication holds:  
\begin{equation}\label{max-kl-half}
\left.
\begin{array}{c}
\displaystyle\sum_{i\in I(\bar{x})}\lambda_i\left( \begin{array}{c}
\nabla^2f_i(\bar{x})\bar{w}\\
\nabla f_i(\bar{x})
\end{array}\right)+\displaystyle\sum_{i\in I(\bar{x})}\beta_i\left( \begin{array}{c}
\nabla f_i(\bar{x})\\
0
\end{array}\right) =\left( \begin{array}{c}
0\\
0
\end{array}\right) \\[0.5cm]
\lambda_{I(\bar{x})}\geq 0,\quad \beta_{I(\bar{x})}\geq 0\quad 
\end{array}
\right\}
\Longrightarrow \lambda_{I(\bar{x})}=0.
\end{equation}
\item[(c)]  The optimal value of the following smooth optimization  problem is positive:
\begin{eqnarray*}
\min_{(\kappa, \lambda, \beta, w, z)}&& \frac{1}{2}\| \sum_{i\in I(\bar{x})}\left[\lambda_i\nabla^2f_i(\bar{x})w+\beta_i\nabla f_i(\bar{x})\right]\|^2\\
\mbox{subject to} && (\ref{ys1})-(\ref{ys3}),\; \|w\|^2=1.
\end{eqnarray*}
\end{description}
Then  $f$ satisfies the K{\L} property at $\bar{x}$ with exponent $\frac12$, which is {\bf sharp} if   there is some $w\in \mathcal{V}(\bar{x})$ with $d^2f(\bar{x}\mid 0)(w)>0$ or equivalently  there is some $(\kappa, \lambda, \beta, w, z)$ satisfying (\ref{ys1}-\ref{ys3}) with $\kappa>0$.
Moreover,   $K{\L}(f, \bar{x}, \frac12)$ is  the square root of the optimal value of the smooth optimization  problem
\begin{eqnarray*}
  \quad \min_{(\kappa, \lambda, \beta, w, z)} && \displaystyle \frac{1}{2}\frac{\| \sum_{i\in I(\bar{x})}\left[\lambda_i\nabla^2f_i(\bar{x})w+\beta_i\nabla f_i(\bar{x})\right]\|^2}{\kappa}\\
  \mbox{subject to} && (\ref{ys1})-(\ref{ys3}), \; \|w\|^2=1,\; \kappa>0.
\end{eqnarray*}
\end{proposition}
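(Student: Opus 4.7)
The plan is to apply Theorem \ref{theo-prox-regular-twice-epidiff} directly, since $f=\max\{f_1,\dots,f_m\}$ is fully amenable at $\bar x$ (hence prox-regular, subdifferentially continuous, and twice epi-differentiable at $\bar x$ for $0$). Once the nonsingularity condition (a) is identified with the hypothesis of that theorem, the KL property with exponent $\tfrac12$, its sharpness (via the set $W\neq\emptyset$), and the modulus formula follow mechanically. The bulk of the work is therefore to prove the three equivalences (a)$\Leftrightarrow$(b)$\Leftrightarrow$(c) and then to rewrite $\mathrm{K\L}(f,\bar x,\tfrac12)=d(0,\partial^>g(\bar x))$ as the minimum value of the smooth program displayed at the end of the proposition.

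For (a)$\Leftrightarrow$(c), I would use the positive homogeneity (of degree 1) of the set-valued map $D(\partial f)(\bar x\mid 0)$: the condition $D(\partial f)(\bar x\mid 0)^{-1}(0)=\{0\}$ is equivalent, by scaling, to $v\neq 0$ whenever $v\in D(\partial f)(\bar x\mid 0)(w)$ with $\|w\|^2=1$, which in turn, via the parametric representation $v=\sum_{i\in I(\bar x)}[\lambda_i\nabla^2f_i(\bar x)w+\beta_i\nabla f_i(\bar x)]$ subject to \eqref{ys1}--\eqref{ys3}, is exactly positivity of the optimal value in (c). For (a)$\Rightarrow$(b), assume $\bar w\neq 0$ with $d^2f(\bar x\mid 0)(\bar w)=0$ and suppose $\lambda_{I(\bar x)},\beta_{I(\bar x)}\geq 0$ satisfy the displayed linear system; if $\lambda\neq 0$, rescale by $s:=\sum\lambda_i>0$ to obtain $\lambda'=\lambda/s\in\Lambda(\bar x)$. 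Taking inner product of the first row with $\bar w$ and using $\bar w\in\mathcal V(\bar x)$ (forced by $d^2f(\bar x\mid 0)(\bar w)<+\infty$) shows $\sum\lambda'_i\langle\nabla^2f_i(\bar x)\bar w,\bar w\rangle\geq 0$; combined with the defining maximum in \eqref{second-order-sub} evaluated to $0$, one gets equality, $\lambda'\in\Lambda(\bar x,\bar w)$, and termwise complementarity $\beta_i\langle\nabla f_i(\bar x),\bar w\rangle=0$, which together produce $0\in D(\partial f)(\bar x\mid 0)(\bar w)$ contradicting (a). Conversely, (b)$\Rightarrow$(a) is immediate: from any $w\neq 0$ with $0\in D(\partial f)(\bar x\mid 0)(w)$ the parametric representation gives $\lambda\neq 0$ (since $\sum\lambda_i=1$) satisfying the hypothesis of (b), the inner product with $w$ and primal complementarity also forcing $d^2f(\bar x\mid 0)(w)=\kappa=0$, contradiction.

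With the equivalences established, (a) supplies the nonsingularity hypothesis \eqref{fqy} needed in Theorem \ref{theo-prox-regular-twice-epidiff}, yielding the KL property with exponent $\tfrac12$. Sharpness is its part (a): $W\neq\emptyset$ is equivalent to the existence of $w\in\mathcal V(\bar x)$ with $d^2f(\bar x\mid 0)(w)>0$, which in the LP primal/dual language is a feasible tuple in \eqref{ys1}--\eqref{ys3} with $\kappa>0$. For the modulus formula, apply Theorem \ref{theo-relation}\,(d) and the identity \eqref{kuku} to write $\mathrm{K\L}(f,\bar x,\tfrac12)^2=\tfrac12\inf\|v\|^2/\langle v,w\rangle$ over the set in \eqref{kuku}; compute $\langle v,w\rangle$ by taking inner product with $w$ of the representation of $v$, using primal complementarity $\beta_i\langle\nabla f_i(\bar x),w\rangle=0$ and the KKT stationarity $\kappa=\langle\nabla f_i(\bar x),z\rangle+\langle\nabla^2 f_i(\bar x)w,w\rangle$ on the support of $\lambda$, together with $\sum\lambda_i=1$ and $\sum\lambda_i\nabla f_i(\bar x)=0$, to obtain $\langle v,w\rangle=\kappa$. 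Normalize $\|w\|=1$ using the degree-1 homogeneity of $D(\partial f)(\bar x\mid 0)$; this gives the smooth program displayed in the proposition.

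The main obstacle I anticipate is the forward direction (a)$\Rightarrow$(b): one must carefully turn the linear relation (where no complementary slackness is assumed) into an element of the normal cone $N_{\mathcal V(\bar x)}(w)$ where complementary slackness is part of the definition. The key trick is the sign combination $\beta_i\geq 0$ and $\langle\nabla f_i(\bar x),\bar w\rangle\leq 0$ together with the scalar identity derived from $d^2f(\bar x\mid 0)(\bar w)=0$, which forces each product to vanish; this, plus the rescaling argument, is the only genuinely nontrivial ingredient. Everything else is book-keeping against the formulas in Sections \ref{sec-notation}--\ref{sec-prox-twice-diff}.
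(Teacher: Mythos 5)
Your proposal is correct and follows essentially the same route as the paper: the paper's own justification is precisely to recall that $f$ is fully amenable (hence prox-regular, twice epi-differentiable and subdifferentially continuous at $\bar{x}$ for $0$), to set up the Poliquin--Rockafellar formula for $D(\partial f)(\bar{x}\mid 0)$ together with the LP primal/dual characterization (\ref{ys1})--(\ref{ys3}) and the identity $d^2f(\bar{x}\mid 0)(w)=\langle v,w\rangle=\kappa$, and then to invoke Theorem \ref{theo-prox-regular-twice-epidiff} directly. Your additional details---the rescaling and sign argument for (a)$\Leftrightarrow$(b) and the computation $\langle v,w\rangle=\kappa$ for the modulus---are exactly the bookkeeping the paper leaves implicit under ``in a straightforward way,'' and they check out.
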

\subsection{The $\ell_1$ regularized functions}
In this subsection, we study the K{\L} property of  the  $\ell_1$ regularized function
\begin{equation}\label{l1-regu}
F(x):=f(x)+\mu\|x\|_1,
\end{equation}
where $f:\R^n\to \R$ is assumed to be of class $\mathcal{C}^2$ and $\mu>0$. Note that $F$ in the form of (\ref{l1-regu}) includes the $\ell_{1-2}$ minimization function studied in \cite{apx17, ylhx15} as  a special case. Now consider some $\bar{x}\in \R^n$ with  $0\in \partial F(\bar{x})$.  By \cite[Theorem 10.24 {\bf (g)}]{roc},   $F$ is  a fully amenable function, and thus  prox-regular, twice epi-differentiable, and subdifferentially continuous at $\bar{x}$ for  0  \cite[Theorem 13.14 and Proposition 13.32]{roc}.  To apply   Theorem \ref{theo-prox-regular-twice-epidiff} to $F$, we recall from the literature some formulas and results relating to $D(\partial F)(\bar{x}\mid 0)$ and $d^2F(\bar{x}\mid 0)$.
 By some calculus rules of subgradients, we deduce from  $0\in \partial F(\bar{x})$ that  each $j\in \{1,\cdots, n\}$ falls  into one of the following distinct index sets:
\begin{equation}\label{four-indexs}
\begin{array}{ll}
I:=\{i\mid \bar{x}_i=0, \;-\mu<(\nabla f(\bar{x}))_i<\mu\},& \hskip-0.3cm \;\;\;K^+:=\{i\mid \bar{x}_i=0,\;(\nabla f(\bar{x}))_i=\mu\},\\
J:=\{i\mid \bar{x}_i\not=0,\;(\nabla f(\bar{x}))_i+\mu \sign(\bar{x}_i)=0\},& K^-:=\{i\mid \bar{x}_i=0,\;(\nabla f(\bar{x}))_i=-\mu\},
\end{array}
\end{equation}
 and that $K^+\cup K^-=\emptyset$ if in particular $0\in \ri \partial F(\bar{x})$. According to \cite[Proposition 3.1 and Theorem 3.1]{cln22},  we have $v\in D(\partial F)(\bar{x}\mid 0)(w)$ if and only if
\begin{equation}\label{chigua}
\left.
\begin{array}{rcrl}
w_j=0&\forall j\in  I,&v_j-(\nabla^2 f(\bar{x})w)_j=0& \forall j\in J\\
w_j\leq 0,&v_j-(\nabla^2 f(\bar{x})w)_j\geq 0,& w_j(v_j-(\nabla^2 f(\bar{x})w)_j)=0 & \forall j\in K^+\\
w_j\geq 0,& v_j-(\nabla^2 f(\bar{x})w)_j\leq 0,& w_j(v_j-(\nabla^2 f(\bar{x})w)_j)=0 & \forall  j\in K^-
\end{array}
\right\}.
\end{equation}
By virtue of (\ref{wjfdyjbdl}), we get  $d^2F(\bar{x}\mid 0)(w)=\langle \nabla^2 f(\bar{x})w, w\rangle +\delta_{\mathcal{V}(\bar{x})}(w)$  for all $w\in \R^n$, where
\begin{equation}\label{gj-cone}
\mathcal{V}(\bar{x}):=\{w\in \R^n\mid w_j=0\;\forall j\in I,\;w_j\leq 0\;\forall j\in K^+,\;w_j\geq 0\;\forall j\in K^-\}.
\end{equation}
In view of the  results just recalled, we  present the following proposition by applying Theorem \ref{theo-prox-regular-twice-epidiff} to $F$ in a straightforward way,  of which the proof is omitted.
\begin{proposition}
\hskip-0.2cm Consider the  $\ell_1$-regularized function $F$ given by  (\ref{l1-regu})   and some $\bar{x}$ with $0\in \partial F(\bar{x})$. Let $I,J,K^+,K^-$ be given by (\ref{four-indexs}) and $\mathcal{V}(\bar{x})$ be given by (\ref{gj-cone}).
Assume that  $D(\partial F)(\bar{x}\mid 0)^{-1}(0)=\{0\}$ or equivalently that 
\begin{equation}\label{tjb}
\left.
\begin{array}{rcrl}
w_j=0&\forall j\in  I,&(\nabla^2 f(\bar{x})w)_j=0 & \forall j\in J\\
w_j\leq 0,&  (\nabla^2 f(\bar{x})w)_j\leq 0,& w_j (\nabla^2 f(\bar{x})w)_j=0  & \forall j\in K^+\\
w_j\geq 0,&  (\nabla^2 f(\bar{x})w)_j\geq 0,& w_j (\nabla^2 f(\bar{x})w)_j=0  & \forall j\in K^-\\
\end{array}
\right\}
\Longrightarrow
w=0.
\end{equation}
Then  $F$ satisfies the K{\L} property at $\bar{x}$ with exponent $\frac12$, which is {\bf sharp} if   there is some $w\in \mathcal{V}(\bar{x})$ with $\langle \nabla^2 f(\bar{x})w, w\rangle>0$.
Moreover,    $K{\L}(F, \bar{x}, \frac12)$ is equal to  the square root of the optimal value of the smooth optimization  problem
\begin{equation}\label{l1-opt}
  \min_{(w, v)}\quad \displaystyle \frac{1}{2}\frac{\|v\|^2}{\langle v, w\rangle} \quad \mbox{subject to}\quad  (\ref{chigua}),\quad \|w\|^2=1,\quad \langle v, w\rangle>0.
\end{equation}
In particular when $0\in \ri \partial F(\bar{x})$, we have $K^+\cup K^-=\emptyset$. In this case, condition (\ref{tjb}) reduces to the nonsingularity of the submatrix $[\nabla^2 f(\bar{x})_{i, j}]_{i,j\in J}$, and to solve (\ref{l1-opt}) is to find the smallest positive eigenvalue of  $[\nabla^2 f(\bar{x})_{i, j}]_{i,j\in J}$.
\end{proposition}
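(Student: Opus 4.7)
The plan is to apply Theorem~\ref{theo-prox-regular-twice-epidiff} directly to the $\ell_1$-regularized function $F$, leaning on the three facts already noted just before the statement: (i) $F$, being fully amenable, is prox-regular, twice epi-differentiable, and subdifferentially continuous at $\bar{x}$ for $0$ (via \cite[Theorem 10.24, Theorem 13.14, Proposition 13.32]{roc}); (ii) the graph of $D(\partial F)(\bar{x}\mid 0)$ is described by (\ref{chigua}); and (iii) $d^2F(\bar{x}\mid 0)(w)=\langle\nabla^2 f(\bar{x})w,w\rangle+\delta_{\mathcal{V}(\bar{x})}(w)$, by (\ref{wjfdyjbdl}) combined with (\ref{gj-cone}). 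So the first step is to verify that $[D(\partial F)(\bar{x}\mid 0)]^{-1}(0)=\{0\}$ coincides with the implication (\ref{tjb}): substituting $v=0$ into (\ref{chigua}) transforms the equality constraint on $J$, and the two-sided inequalities plus complementarity on $K^{\pm}$, exactly into the left-hand side of (\ref{tjb}).

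Given this equivalence, Theorem~\ref{theo-prox-regular-twice-epidiff} immediately yields the K\L\ property of $F$ at $\bar{x}$ with exponent $\tfrac12$. For the sharpness claim, by the same theorem I need $W=\{w\mid 0<d^2F(\bar{x}\mid 0)(w)<+\infty\}\neq\emptyset$; using the explicit formula for $d^2F(\bar{x}\mid 0)$, this is equivalent to the existence of $w\in\mathcal{V}(\bar{x})$ with $\langle\nabla^2 f(\bar{x})w,w\rangle>0$, as required.

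To compute the modulus, I would use the second expression in (\ref{kuku}) applied to $F$, which gives
\[
K\L(F,\bar{x},\tfrac12)^2=\inf\left\{\frac{\|v\|^2}{2\langle v,w\rangle}\;\Big|\;v\in D(\partial F)(\bar{x}\mid 0)(w),\;\langle v,w\rangle>0\right\}.
\]
Since the graph of $D(\partial F)(\bar{x}\mid 0)$ is a cone and the objective is invariant under the joint rescaling $(w,v)\mapsto(\lambda w,\lambda v)$ for $\lambda>0$, while feasibility forces $w\neq 0$, the normalization $\|w\|^2=1$ may be appended without altering the infimum. Substituting (\ref{chigua}) then rewrites the problem as (\ref{l1-opt}), and taking square roots concludes the modulus formula.

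For the final assertion under $0\in\ri\partial F(\bar{x})$, the indices $K^+,K^-$ are empty, so (\ref{chigua}) reduces to $w_j=0$ for $j\in I$ and $v_j=(\nabla^2 f(\bar{x})w)_j$ for $j\in J$, with $v_I$ unconstrained. Since $w_I=0$ makes $\langle v,w\rangle=\langle v_J,w_J\rangle$ independent of $v_I$, any minimizer has $v_I=0$ and $v_J=Aw_J$ with $A:=[\nabla^2 f(\bar{x})_{i,j}]_{i,j\in J}$; in particular, (\ref{tjb}) degenerates to $\ke A=\{0\}$. The reduced problem is then $\min\{\tfrac{\|Aw_J\|^2}{2\langle Aw_J,w_J\rangle}\mid\|w_J\|=1,\;\langle Aw_J,w_J\rangle>0\}$, which by the spectral decomposition of the symmetric matrix $A$ (exactly as done in the proof of Proposition~\ref{prop-nonsingular}) equals $\lambda_{\min}/2$, where $\lambda_{\min}$ is the smallest positive eigenvalue of $A$. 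I expect the only real subtlety to be the positive-homogeneity normalization in the third paragraph; the remaining steps are either direct consequences of the earlier theorems or elementary calculations.
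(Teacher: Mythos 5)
Your proposal is correct and follows exactly the route the paper indicates (the paper omits the proof, stating it is a straightforward application of Theorem~\ref{theo-prox-regular-twice-epidiff} using the formulas for $D(\partial F)(\bar{x}\mid 0)$ and $d^2F(\bar{x}\mid 0)$ recalled beforehand). Your filled-in details --- the $v=0$ substitution in (\ref{chigua}) to get (\ref{tjb}), the use of (\ref{kuku}) with the homogeneity normalization $\|w\|=1$, and the reduction to the $J$-submatrix with $v_I=0$ in the relative-interior case --- are all sound.
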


\subsection{The $\ell_p$  regularized least square functions}
In this subsection, we  study  the K{\L}  property of  the $\ell_p$ ($0<p<1$) regularized least square function
\begin{equation}\label{rp}
F(x):=\|Ax-b\|_2^2+\mu\|x\|_p^p\quad\quad \forall x\in \R^n,
\end{equation}
where $A\in \R^{m\times n}$, $b\in \R^m$, $\mu >0$ and $\|x\|_p^p:=\displaystyle\sum_{i=1}^n |x_i|^p$. To avoid the technical verification of the prox-regularity and the twice epi-differentiability of $F$, we will not apply Theorem \ref{theo-prox-regular-twice-epidiff} in a direct way. Instead, we will apply Theorem \ref{theo-relation} and Proposition \ref{prop-nonsingular} by converting the study of the  K{\L} property of
$F$   to that of a restricted smooth function.  To begin with, we get the formula  for $\partial F(x)$  by using the  calculus rules of subgradients \cite[Exercise 8.8 and Proposition 10.5]{roc}  as follows:
\begin{equation}\label{lp-subg}
\partial F(x)\hskip-0.05cm =\hskip-0.05cm \left\{2A^T(Ax-b)+\mu p v \hskip-0.05cm \mid \hskip-0.05cm v_i={\rm sign}(x_i)|x_i|^{p-1} \forall i\in \supp(x), v_i\in \R\;\forall i\not\in \supp(x)\right\}
\end{equation}
by virtue of which, and due to $|t|^{p-1}\to +\infty$ as $t\to 0$ with $t\not=0$,  we further  have  the equality
\begin{equation}\label{lp-subg000}
\limsup_{y\to x,\;y_{i_0}\not=0}\partial F(y)=\emptyset\quad \forall i_0\not\in \supp(x).
\end{equation}
Here $\supp(x):=\{i\mid x_i\not=0\}$ denotes the support index set of $x$.
 For each nonempty  $I\subset\{1,\cdots, n\}$, let  $A_I:=[A_i]_{i\in I}$ with $A_i$ being the $i$-th column of $A$  and define  $F_I:\R^{|I|}\to \R$ by
 \begin{equation}\label{fi}
 F_I(y):=\|A_Iy-b\|^2+\mu\|y\|_p^p.
 \end{equation}
Then for each $\bar{x}\in \R^n\backslash \{0\}$,  we have in terms of $I:=\supp(\bar{x})$, $\bar{x}_I:=(\bar{x}_i)_{i\in I}$, $v_I:=(v_i)_{i\in I}$, $G(x):=(\max\{F(x)-F(\bar{x}), 0\})^{1-\theta}$, and $G_I(y)=(\max\{F_I(y)-F_I(\bar{x}_I), 0\})^{1-\theta}$, 
\begin{equation}\label{lp-outer-000}
\partial^>G(\bar{x})=\{v\in \R^n\mid v_I\in \partial^>G_I(\bar{x}_I)\}\quad \forall \theta\in [0, 1),
\end{equation}
which follows readily from  the definition of outer limiting subdifferential sets by taking (\ref{wcwf}),  (\ref{lp-subg}) and (\ref{lp-subg000}) into account.
Moreover,   we have  $\partial^>F(0)=\emptyset$ due to (\ref{lp-subg000}). Therefore, $F$ satisfies the K{\L} property at $\bar{x}=0$ with exponent $0$, which is not {\bf sharp}.
With the help of Theorem \ref{theo-relation} and the identity (\ref{lp-outer-000}),   the study of the K{\L} property of
$F$ at some  $\bar{x}\in \R^n\backslash \{0\}$ can be converted to that of $F_I$ (with $I=\supp(\bar{x})$) at $\bar{x}_I$,
around which $F_I$ is of class $\mathcal{C}^\infty$. We demonstrate this idea in the following proposition.
\begin{proposition}
Consider the $\ell_p$ regularized least square function $F$ given by (\ref{rp}) and some  $\bar{x}\in \R^n\backslash\{0\}$. The following hold in terms of  $I:=\supp(\bar{x})$, $F_I$ defined by (\ref{fi}), and  $\bar{x}_I:=(\bar{x}_i)_{i\in I}$:
\begin{description}
\item[(a)]  $ K\L(F, \bar{x}, \theta)=K\L(F_I, \bar{x}_I, \theta)$ for all $\theta\in [0, 1)$.
\item[(b)] If $\nabla F_I(\bar{x}_I)=0$ and  $\nabla^2 F_I(\bar{x}_I)$  is negative definite, then $\partial^>F(\bar{x})=\emptyset$ and  $F$ satisfies the K{\L} property at $\bar{x}$ with exponent 0, which is not {\bf sharp}.
\item[(c)]  If $\nabla F_I(\bar{x}_I)=0$ and  $\nabla^2 F_I(\bar{x}_I)$  is  positive definite or indefinite,   then $F$ satisfies the K{\L} property at $\bar{x}$ with a {\bf sharp} exponent $\frac12$,   and in this case,   $K\L(F, \bar{x}, \frac12)=\sqrt{\frac{\lambda_{\min}}{2}}$ with $\lambda_{\min}$ being  the smallest positive eigenvalue of $\nabla^2 F_I(\bar{x}_I)$.
\end{description}
\end{proposition}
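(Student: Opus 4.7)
The plan is to reduce the entire proposition to Proposition \ref{prop-nonsingular} applied to the restricted function $F_I$, using the identity (\ref{lp-outer-000}) to transport conclusions back to $F$ through the distance characterization $K{\L}(f,\bar x,\theta)=d(0,\partial^>g(\bar x))$ from Theorem \ref{theo-relation}~\textbf{(d)}.

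For part \textbf{(a)}, the identity (\ref{lp-outer-000}) says that $v\in\partial^>G(\bar x)$ if and only if $v_I\in\partial^>G_I(\bar x_I)$, with the components $v_j$ for $j\notin I$ being entirely unconstrained. Consequently the point in $\partial^>G(\bar x)$ closest to the origin is obtained by setting $v_j=0$ for $j\notin I$, so $d(0,\partial^>G(\bar x))=d(0,\partial^>G_I(\bar x_I))$. Theorem \ref{theo-relation}~\textbf{(d)}, applied to both $F$ at $\bar x$ and $F_I$ at $\bar x_I$, then yields $K{\L}(F,\bar x,\theta)=K{\L}(F_I,\bar x_I,\theta)$ for every $\theta\in[0,1)$.

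For parts \textbf{(b)} and \textbf{(c)}, the decisive observation is that $\bar x_i\neq 0$ for every $i\in I=\supp(\bar x)$, so the map $y\mapsto\sum_{i\in I}|y_i|^p$ is of class $\mathcal{C}^\infty$ on a neighborhood of $\bar x_I$; hence $F_I$ is twice continuously differentiable around $\bar x_I$ and Proposition \ref{prop-nonsingular} applies. Under the hypothesis of \textbf{(b)}, Proposition \ref{prop-nonsingular}~\textbf{(a)} delivers $\partial^>F_I(\bar x_I)=\emptyset$ together with the K{\L} property of $F_I$ at $\bar x_I$ with non-sharp exponent $0$. Since $G$ and $F-F(\bar x)$ agree on the locus $\{F>F(\bar x)\}$ when $\theta=0$, the outer limiting subdifferentials of $G$ and $F$ at $\bar x$ coincide, so (\ref{lp-outer-000}) forces $\partial^>F(\bar x)=\emptyset$; part \textbf{(a)} with $\theta=0$ then carries over the non-sharp K{\L} property to $F$ at $\bar x$. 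Under the hypothesis of \textbf{(c)}, Proposition \ref{prop-nonsingular}~\textbf{(b)} gives that $F_I$ satisfies the K{\L} property at $\bar x_I$ with sharp exponent $\tfrac12$ and modulus $\sqrt{\lambda_{\min}/2}$; applying part \textbf{(a)} first with $\theta=\tfrac12$ to transfer the modulus equality, and then for every $\theta'\in[0,\tfrac12)$ where the identity $K{\L}(F_I,\bar x_I,\theta')=0$ propagates to $K{\L}(F,\bar x,\theta')=0$, simultaneously transfers the modulus and the sharpness to $F$ at $\bar x$.

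The main obstacle, modest as it is, lies in part \textbf{(a)}: one needs a careful reading of (\ref{lp-outer-000}) to confirm that the $I^c$-components of outer limiting subgradients really are free. This uses the subgradient formula (\ref{lp-subg}) together with the blow-up (\ref{lp-subg000}), which forces any sequence $x^k\to\bar x$ with convergent $v^k\in\partial F(x^k)$ eventually to satisfy $\supp(x^k)\subseteq I$, while leaving the $I^c$-components of $v^k$ completely unconstrained. Once this product structure and the resulting distance transfer are in hand, the rest is a routine consequence of Proposition \ref{prop-nonsingular} applied to the smooth restriction $F_I$.
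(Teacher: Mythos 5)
Your proposal is correct and follows essentially the same route as the paper: the paper itself proves this proposition only by pointing to the identity (\ref{lp-outer-000}) (derived from (\ref{lp-subg}) and the blow-up (\ref{lp-subg000})), the distance characterization in Theorem \ref{theo-relation}\textbf{(d)}, and Proposition \ref{prop-nonsingular} applied to the smooth restriction $F_I$, which is exactly your reduction. Your observations that $\partial^>G(\bar x)$ has the product structure $\partial^>G_I(\bar x_I)\times\R^{I^c}$ (so the distances to the origin agree) and that sharpness transfers because vanishing of the modulus is equivalent to failure of the K{\L} property are the right details to fill in, and they match the paper's intent.
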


\begin{remark}
According to \cite[Theorem 1]{hlmy2021}, any $\bar{x}\in \R^n\backslash\{0\}$ is a local minimum of   $F$ if and only if $\nabla F_I(\bar{x}_I)=0$ and  $\nabla^2 F_I(\bar{x}_I)$  is  positive definite. This suggests that  $F$ satisfies the K{\L} property at any  nonzero local minimum with a {\bf sharp} exponent $\frac12$. See \cite{bllzd22}  for more details on this result.  
\end{remark}

\bibliographystyle{siamplain}

\end{document}